\newtheorem{thm}{Theorem}
\newtheorem{lem}{Lemma}
\newtheorem{prop}{Proposition}
\newtheorem{definition}{Definition}
\newcommand{\R}{\mathbb{R}}
\begin{document}
\title[Asymptotics of signed Bernoulli convolutions]
{Asymptotics of signed Bernoulli convolutions scaled by multinacci numbers}
\date{\today}
\author{Xianghong Chen \and Tian-You Hu}
\address{X. Chen\\Department of Mathematical Sciences\\University of Wisconsin-Milwaukee\\Milwaukee, WI 53211, USA}
\email{chen242@uwm.edu}
\address{T. Hu\\Department of Mathematics\\University of Wisconsin-Green Bay\\Wisconsin, WI 54311, USA}
\email{hut@uwgb.edu}

\subjclass[2010]{28A80, 11B39}
\keywords{Bernoulli convolution, golden ratio}

\begin{abstract}
We study the signed Bernoulli convolution 
$$\nu_\beta^{(n)}=*_{j=1}^{n}
\left (\frac{1}{2}\delta_{\beta^{-j}}-\frac{1}{2}\delta_{-\beta^{-j}}\right ),\ n\ge 1$$
where $\beta>1$ satisfies 
$$\beta^m=\beta^{m-1}+\cdots+\beta+1$$
for some integer $m\ge 2$. When $m$ is odd, we show that the variation $|\nu_\beta^{(n)}|$ coincides the unsigned Bernoulli convolution
$$\mu_\beta^{(n)}=*_{j=1}^{n}
\left (\frac{1}{2}\delta_{\beta^{-j}}+\frac{1}{2}\delta_{-\beta^{-j}}\right ).$$
When $m$ is even, we obtain the exact asymptotic of the total variation $\|\nu_\beta^{(n)}\|$ as $n\rightarrow\infty$. 
\end{abstract}
\maketitle

\section{Introduction}
In this paper we initiate the study of the signed Bernoulli convolution
\begin{equation}\label{signed-bernoulli}
\nu_\beta^{(n)}=*_{j=1}^{n}
\left (\frac{1}{2}\delta_{\beta^{-j}}-\frac{1}{2}\delta_{-\beta^{-j}}\right ),\ n\ge 1
\end{equation}
where $\beta>1$ and $\delta_x$ denotes the Dirac measure at $x\in\R$. Equivalently, $\nu_\beta^{(n)}$ is defined inductively by letting 
\begin{equation}\label{inductive}
\begin{cases}
\nu_\beta^{(0)}=\delta_0,\\
\nu_\beta^{(n)}=\nu_\beta^{(n-1)}*\left (\frac{1}{2}\delta_{\beta^{-n}}-\frac{1}{2}\delta_{-\beta^{-n}}\right ),\ n\ge 1.
\end{cases}
\end{equation}
By expanding out the convolution in \eqref{signed-bernoulli}, we also have
\begin{equation}\label{expansion}
\nu_\beta^{(n)}
=\frac{1}{2^n} \sum_{\varepsilon_j=\pm 1\atop j=1,\cdots,n} 
\varepsilon_1\cdots\varepsilon_n \,\delta_{\sum_{j=1}^n \varepsilon_j \beta^{-j}},\ n\ge 1.
\end{equation}
The definition of $\nu_\beta^{(n)}$ is related to that of the unsigned Bernoulli convolution
\begin{align*}
\mu_\beta^{(n)}
=*_{j=1}^{n}
\left (\frac{1}{2}\delta_{\beta^{-j}}+\frac{1}{2}\delta_{-\beta^{-j}}\right )
\end{align*}
which converges weakly to a probability measure $\mu_\beta$ as $n\rightarrow\infty$. The measure $\mu_\beta$ is a classical subject of study. We refer the reader to \cite{PeresSchlagSolomyak2000} for more background, and to \cite{Shmerkin2014}, \cite{Varju2016}, \cite{HareSidorov2016} for some recent results.

In contrast to $\mu_\beta^{(n)}$, the signed Bernoulli convolution $\nu_\beta^{(n)}$ converges weakly to the null measure for any $\beta>1$. This can be seen by writing for $f\in C(\R)$
$$
\langle \nu_\beta^{(n)},f \rangle
=\frac{1}{2}
\langle \nu_\beta^{(n-1)},f*\delta_{-\beta^{-n}}-f*\delta_{\beta^{-n}}\rangle
$$
(where $\langle \sigma,f \rangle$ denotes $\int f d\sigma$), noting that the last integral converges to zero as $\nu_\beta^{(n-1)}$ is supported in $[-(\beta-1)^{-1},(\beta-1)^{-1}]$ and $\beta^{-n}\rightarrow 0$. Moreover, if there is cancellation in the expansion \eqref{expansion}, or equivalently, if the total variation satisfies
$$\|\nu_\beta^{(n)}\|<1$$
for some $n$, then by Young's convolution inequality $\|\nu_\beta^{(n)}\|$ must decay at least exponentially as $n\rightarrow\infty$. It is then of interest to determine the exact rate of decay of $\|\nu_\beta^{(n)}\|$ in such situation.

In the present paper, we study the case where $\beta>1$ satisfies
\begin{equation}\label{evenacci}
\beta^m=\beta^{m-1}+\cdots+\beta+1
\end{equation}
for some integer $m\ge 2$. 
Note that when $m=2$, this corresponds to the golden ratio
\begin{equation}\label{m=2}
\beta=\frac{1+\sqrt{5}}{2}=1.618033\cdots.
\end{equation}
Our main result is the following.

\begin{thm}\label{thm}
Let 
$$a_n=2^n\|\nu_\beta^{(n)}\|,\ n\ge 0.$$
Suppose $\beta$ satisfies \eqref{evenacci} for an even integer $m$. Then
\begin{equation}
a_n=\begin{cases}\label{recurrence}
2^n, &\text{if } n\le m,\\
2a_{n-1}-2a_{n-m}+2a_{n-m-1}, &\text{if } n\ge m+1.
\end{cases}
\end{equation}
In particular, there exists a constant $C>0$ such that
\begin{equation}\label{asymptotic}
\|\nu_\beta^{(n)}\|\sim C\left (\frac{\lambda}{2}\right )^n,\ \text{as }n\rightarrow\infty
\end{equation}
where $\lambda\in(1,2)$ is the only real root of the equation
\begin{equation}\label{characteristic}
x^{m+1}=2x^m-2x+2.
\end{equation}
\end{thm}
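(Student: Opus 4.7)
Write $S(\varepsilon)=\sum_{j=1}^n \varepsilon_j \beta^{-j}$ and $P(\varepsilon)=\prod_{j=1}^n \varepsilon_j$ for $\varepsilon \in \{-1,+1\}^n$, and set $c_n(x)=\sum_{\varepsilon : S(\varepsilon)=x}P(\varepsilon)$, so that \eqref{expansion} gives $a_n=\sum_x |c_n(x)|$. The plan is first to handle the base case $a_n=2^n$ for $n\le m$ by showing all $2^n$ sums $S(\varepsilon)$ are distinct. Any collision forces a nontrivial identity $\sum_{j=1}^n \eta_j \beta^{-j}=0$ with $\eta_j \in \{-1,0,1\}$; dividing \eqref{evenacci} by $\beta^{k+m}$ yields the fundamental relation $\beta^{-k}=\beta^{-(k+1)}+\cdots+\beta^{-(k+m)}$, and a standard argument in $\mathbb{Z}[\beta^{-1}]$ shows every $\{-1,0,1\}$-valued linear relation among the $\beta^{-j}$ is built from shifted copies of this one, each spanning $m+1$ consecutive indices. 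For $n\le m$ no such relation fits.

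For the recurrence at $n\ge m+1$, the key algebraic mechanism is the \emph{flip move} at position $k$: if $(\varepsilon_k,\varepsilon_{k+1},\ldots,\varepsilon_{k+m})=(s,-s,\ldots,-s)$ for some $s\in\{\pm 1\}$, then replacing this block by $(-s,s,\ldots,s)$ preserves $S(\varepsilon)$ but multiplies $P(\varepsilon)$ by $(-1)^{m+1}=-1$, using that $m$ is even. I would show by the above generation lemma that these moves generate all collisions, so the atoms surviving cancellation correspond to equivalence classes on which the signed sum $\sum P(\varepsilon)$ is nonzero. To extract \eqref{recurrence}, I plan to use the convolution identity $\nu_\beta^{(n)} = \nu_\beta^{(n-1)} * V_n$ with $V_n=\frac{1}{2}(\delta_{\beta^{-n}}-\delta_{-\beta^{-n}})$, and exploit the substitution $\beta^{-n}=\beta^{-(n-m)}-\sum_{j=1}^{m-1}\beta^{-(n-j)}$ to rewrite $V_n$ as a signed combination of convolutions of factors $V_k,U_k=\frac{1}{2}(\delta_{\beta^{-k}}\pm\delta_{-\beta^{-k}})$ for $k<n$. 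Tracking the total variation carefully (cross terms combine cleanly because of disjointness of supports at levels $n-1$ down to $n-m$, by the base-case argument) should isolate the contributions from levels $n-1$, $n-m$, and $n-m-1$ with the correct coefficients $2,-2,2$ after restoring the factor $2^n$. An equivalent inductive route would partition level-$n$ sign sequences by whether the suffix $(\varepsilon_{n-m},\ldots,\varepsilon_n)$ is reducible, pair reducible sequences via the flip involution, and read off \eqref{recurrence} from the residue.

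The asymptotic \eqref{asymptotic} then follows from standard linear recurrence theory. The characteristic polynomial $p(x)=x^{m+1}-2x^m+2x-2$ satisfies $p(1)=-1<0<2=p(2)$, so there is a real root $\lambda\in(1,2)$; a Rouch\'e comparison on $|z|=r$ for $r$ slightly larger than $\lambda$, pitting $z^{m+1}$ against $2z^m$ and absorbing the remaining low-order terms, confirms that $\lambda$ strictly dominates the other roots in modulus. Since $a_n$ is positive and satisfies \eqref{recurrence}, the partial-fraction expansion of its generating function gives $a_n\sim C\lambda^n$ for some $C>0$, hence $\|\nu_\beta^{(n)}\|\sim C(\lambda/2)^n$. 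I expect the principal obstacle to be the rigorous derivation of the recurrence: one must verify that no higher-order iterated-flip configurations contribute unaccounted-for cancellations, and that the signs line up to give exactly the coefficients $2,-2,2$. The cleanliness of the four-term recurrence suggests that, once the flip involution is set up correctly, all cancellations are local and the bookkeeping closes in one inductive step.
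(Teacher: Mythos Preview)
Your base case and the asymptotic analysis are essentially correct and match the paper's treatment. The real issue is the derivation of the recurrence, where your sketch has a genuine gap.

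The convolution rewriting you propose does not work as stated: substituting $\beta^{-n}=\beta^{-(n-m)}-\sum_{j=1}^{m-1}\beta^{-(n-j)}$ lets you write $\delta_{\beta^{-n}}$ as a convolution of Dirac masses, but $V_n=\frac12(\delta_{\beta^{-n}}-\delta_{-\beta^{-n}})$ does not then factor as a signed combination of the $U_k,V_k$ in any way that lets you read off total variation term by term. More importantly, your appeal to ``disjointness of supports at levels $n-1$ down to $n-m$, by the base-case argument'' is circular: the base case only controls the first $m$ levels, whereas for general $n$ you need to know that the atoms surviving in $\nu_\beta^{(n-1)}$ are still separated enough that convolving with $V_n$ produces at most pairwise overlaps. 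That is precisely the nontrivial content.

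The underlying missing fact is that $|c_n(x)|\in\{0,1\}$ for every $x$, i.e.\ no point of the support carries mass greater than $2^{-n}$. Your flip involution shows that certain pairs cancel, but it does not by itself rule out equivalence classes of size three or more under ``same $S$-value'', in which the signed sum could be $\pm1$ or could vanish in a way not captured by a single local pairing at the last $m+1$ coordinates. You flag this yourself (``no higher-order iterated-flip configurations''), but you give no mechanism to exclude it; the claimed ``standard argument in $\mathbb{Z}[\beta^{-1}]$'' that all $\{-1,0,1\}$-relations are built from shifts of the fundamental one is neither standard nor, in the form needed here, obviously true once shifts can overlap.

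The paper closes this gap with a different device. It builds a pruned tree $\mathscr T^*$ inductively (keep only children of surviving nodes that land in the support) and proves a \emph{separation property}: the map $\varepsilon^*\mapsto x_{\varepsilon^*}$ restricted to each level $\mathscr D_n^*$ is a strictly order-preserving bijection onto $A_n$. This is established by an inductive argument showing that once the two canceling strings $(-,+,\dots,+)$ and $(+,-,\dots,-)$ are removed at level $m+1$ below any surviving node, the remaining descendants of the two subtrees stay strictly interleaved in $\mathbb R$ forever. This immediately gives $a_n=|\mathscr D_n^*|$ and $|c_n(x)|\le 1$. A second lemma (leaflessness: every surviving node has at least one surviving child) then lets one count nodes with two children as $a_{n-m+1}-a_{n-m}$, yielding $a_{n+1}=2a_n-2(a_{n-m+1}-a_{n-m})$. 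Your suffix-partition idea is in the same spirit, but to make it rigorous you would still need the order-preserving/separation statement, which is where the actual work lies.
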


For instance, if $\beta$ is given by the golden ratio \eqref{m=2}, Theorem \ref{thm} gives
$$\|\nu_\beta^{(n)}\|\sim C (0.771844\cdots)^n,\ \text{as }n\rightarrow\infty.$$
On the other hand, if $\beta$ satisfies \eqref{evenacci} for an odd integer $m$, then the total variation $\|\nu_\beta^{(n)}\|$ has no decay in $n$; in fact, $\|\nu_\beta^{(n)}\|\equiv 1$ in this case. This will be shown in Section \ref{sec:remarks}.

The proof of Theorem \ref{thm} relies on analyzing the cancellation pattern 
in $\nu_\beta^{(n)}$ as $n$ increases. The presence of overlap (due to $\beta<2$) is remedied by the fact that cancellation occurs whenever an overlap is formed. This allows us to identify $\{\nu_\beta^{(n)}\}_{n\ge 0}$ with a plane tree, based on which the recurrence relation \eqref{recurrence} is derived. The situation becomes more involved if one considers more general $\beta$. However, we hope the analysis in this paper will provide a simple model for the study of more general situations. As an application, Theorem \ref{thm} can be used to derive nontrivial bounds on certain sine products; see Section \ref{sec:remarks}.

Although not directly related, our study should be compared with that of the unsigned Bernoulli convolutions. We refer the reader to \cite{AlexanderZagier1991}, \cite{Lau1992}, \cite{Lau1993}, \cite{LedrappierPorzio1994}, \cite{LedrappierPorzio1996}, \cite{Hu1997}, \cite{Ngai1997}, \cite{LauNgai1998}, \cite{SidorovVershik1998}, \cite{Lalley1998}, \cite{Porzio1998}, \cite{LauNgai1999}, \cite{Fan2002}, \cite{GrabnerKirschenhoferTichy2002}, \cite{Feng2005}, and references therein.

The plan of the paper is as follows. In Section \ref{sec:notation}, we introduce some notation which will be used throughout the remainder of the paper. We also state at the end three main lemmas. In Sections \ref{sec:diamond}, \ref{sec:order} and \ref{sec:leafless}, respectively, we give the proofs of the three lemmas. In Section \ref{sec:proof} we prove Theorem \ref{thm}. In Section \ref{sec:remarks}, we conclude the paper with some remarks.

\section{Notation and definition}\label{sec:notation}
For $n\ge 1$, denote
$$\mathscr D_n=\{\pm 1\}^n.$$
Suppose
$$\boldsymbol\varepsilon=(\varepsilon_1,\cdots,\varepsilon_n)\in \mathscr D_n.$$
We will denote
\begin{equation}\label{eq:x_eps}
x_{\boldsymbol\varepsilon} :=\sum_{j=1}^{n} \varepsilon_j \beta^{-j}\in\R.
\end{equation}
With this notation, we can write
\begin{equation}\label{eq:expansion}
\nu_\beta^{(n)}
=\frac{1}{2^n}\sum_{\boldsymbol\varepsilon\in\mathscr D_n} \varepsilon_1\cdots\varepsilon_n \,\delta_{x_{\boldsymbol\varepsilon}},\ n\ge 1.
\end{equation}
Note that some of the ${x_{\boldsymbol\varepsilon}}$'s may coincide and cancel each other. This motivates the definition
$$A_n:=\text{supp}(\nu_\beta^{(n)})\subset\R,\ n\ge 0.$$
Where $\text{supp}(\cdot)$ stands for the support of the measure. It will be crucial to understand the sets $A_n$. To this end, we introduce more notation. 

Suppose $\boldsymbol\varepsilon$ is as above and 
$$\boldsymbol\varepsilon'=(\varepsilon'_1,\cdots,\varepsilon'_{n+1})\in \mathscr D_{n+1}.$$
We write
$$\boldsymbol\varepsilon\rightarrow\boldsymbol\varepsilon'$$
if 
$$\varepsilon_j=\varepsilon'_j,\ j=1,\cdots,n,$$
in which case we call $\boldsymbol\varepsilon'$ a \textit{child} of $\boldsymbol\varepsilon$ (\cite[Section 3.2]{GrossYellen2006}). Note that $\boldsymbol\varepsilon'$ must be one of
$$\boldsymbol\varepsilon-:=(\varepsilon_1,\cdots,\varepsilon_n,-1),$$
$$\boldsymbol\varepsilon+:=(\varepsilon_1,\cdots,\varepsilon_n,+1).$$
For convenience, we let
$$\mathscr D_0=\{0\},\quad x_0=0$$
and set 
$$0\rightarrow{-1},\quad 0\rightarrow{+1}.$$

\begin{definition}
Using the above notation, we define inductively 
$$\quad \begin{cases}
\mathscr D_0^*=\mathscr D_0,\\
\mathscr D_{n+1}^*=\{\boldsymbol\varepsilon'\in \mathscr D_{n+1}:\ 
\boldsymbol\varepsilon\rightarrow\boldsymbol\varepsilon' \text{ for some } \boldsymbol\varepsilon\in\mathscr D_{n}^* 
\text{ and } x_{\boldsymbol\varepsilon'}\in A_{n+1}\},\ n\ge 0.
\end{cases}$$
\end{definition}

In general, the set $\mathscr D_n^*$ is represented by $A_n$ with multiplicities. However, under the assumption of Theorem \ref{thm}, it will be shown that the multiplicity is always one, namely, $\mathscr D_n^*$ is `isomorphic' to $A_n$ (see Lemma \ref{lem:order-preserve} below).

Note that, together with the relation ``$\rightarrow$'', the set
$$\mathscr T:=\bigcup_{n\ge 0} \mathscr D_n$$
forms a \textit{directed rooted tree} (\cite[Section 3.2]{GrossYellen2006}). Moreover, each $\mathscr D_n$ is naturally equipped with the \textit{lexicographical order} (with the convention $-1<+1$). Let
$$\mathscr T^*:=\bigcup_{n\ge 0} \mathscr D_n^*.$$
Then $\mathscr T^*$ can be thought of as obtained from pruning the full binary tree $\mathscr T$ defined above. As a subset of $\mathscr T$, $\mathscr T^*$ inherits the relation ``$\rightarrow$'' and itself becomes a directed rooted tree (with the same root $0$); also, each $\mathscr D_n^*$ inherits the lexicographical order of $\mathscr D_n$. 

For $\boldsymbol\varepsilon\in\mathscr T$, we will denote by $\mathscr T(\boldsymbol\varepsilon)$ the \textit{subtree} (\cite[Section 3.1]{GrossYellen2006}) of $\mathscr T$ rooted at $\boldsymbol\varepsilon$. Similarly, for $\boldsymbol\varepsilon^*\in\mathscr T^*$, $\mathscr T^*(\boldsymbol\varepsilon^*)$ denotes the subtree of $\mathscr T^*$ rooted at $\boldsymbol\varepsilon^*$. For $n\ge 0$, $\mathscr T^*(\boldsymbol\varepsilon^*;n)$ denotes the \textit{$n$-th level} (\cite[Section 3.1]{GrossYellen2006}) of $\mathscr T^*(\boldsymbol\varepsilon^*)$, with the convention $\mathscr T^*(\boldsymbol\varepsilon^*;0)=\{\boldsymbol\varepsilon^*\}$.

The proof of Theorem \ref{thm} will be based on the following three lemmas, whose proofs are given in Sections \ref{sec:diamond}, \ref{sec:order} and \ref{sec:leafless} respectively. 

From now on till the end of Section \ref{sec:proof}, unless otherwise stated, we will always assume that $\beta$ satisfies \eqref{evenacci} for an even integer $m$, as assumed in Theorem \ref{thm}.

\begin{lem}[The first pruning]\label{lem:diamond}
$$\mathscr D_{n}^*=
\begin{cases} 
\mathscr D_{n}, & \text{if } n\le m,\\
\mathscr D_{n}\backslash\{(-,+,\cdots,+),(+,-,\cdots,-)\}, & \text{if } n=m+1.
\end{cases}$$
\end{lem}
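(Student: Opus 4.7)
The plan is to analyze when two distinct sign sequences $\boldsymbol\varepsilon, \boldsymbol\varepsilon'\in\mathscr D_n$ produce the same point $x_{\boldsymbol\varepsilon}=x_{\boldsymbol\varepsilon'}$ with opposite coefficient signs (so that they cancel in \eqref{eq:expansion}), for $n\le m+1$. The central tool is the identity
\begin{equation}
S_m:=\sum_{j=1}^m \beta^{-j}=1,
\end{equation}
obtained from \eqref{evenacci} by dividing through by $\beta^m$. Since the partial sums $S_k=\sum_{j=1}^k\beta^{-j}$ are strictly increasing, $S_k<1$ for $k<m$.

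Next I would establish a \emph{lex-order monotonicity} estimate: if $\boldsymbol\varepsilon,\boldsymbol\varepsilon'\in\mathscr D_n$ first differ at position $k$, with $\varepsilon_k=-1<+1=\varepsilon'_k$, then
\begin{equation}
x_{\boldsymbol\varepsilon'}-x_{\boldsymbol\varepsilon}=2\beta^{-k}+\sum_{j=k+1}^n(\varepsilon'_j-\varepsilon_j)\beta^{-j}\ge 2\beta^{-k}\bigl(1-S_{n-k}\bigr),
\end{equation}
with equality iff $\varepsilon'_j=-1,\varepsilon_j=+1$ for every $j>k$. For $n\le m$, we have $n-k\le m-1$, hence $S_{n-k}<1$ and the inequality is strict. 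So all $2^n$ values $x_{\boldsymbol\varepsilon}$ are distinct, no cancellation occurs in \eqref{eq:expansion}, and $A_n$ has $2^n$ points; an induction on $n$ using the definition of $\mathscr D_n^*$ then yields $\mathscr D_n^*=\mathscr D_n$.

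For $n=m+1$, the bound becomes $S_{n-k}\le S_m=1$ with equality \emph{only} when $k=1$. Hence the unique pair of sequences giving equality is
\begin{equation}
\boldsymbol\varepsilon=(-,+,\ldots,+),\qquad \boldsymbol\varepsilon'=(+,-,\ldots,-).
\end{equation}
I would verify directly that $x_{\boldsymbol\varepsilon}=x_{\boldsymbol\varepsilon'}=0$ (both evaluate to $2\beta^{-1}-S_{m+1}=2\beta^{-1}-1-\beta^{-m-1}$, which vanishes by \eqref{evenacci}). Their sign products in \eqref{eq:expansion} are $(-1)(+1)^m=-1$ and $(+1)(-1)^m=+1$, where the even parity of $m$ is crucial: these exactly cancel. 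All other pairs satisfy strict inequality, so the value $0$ is the only point missing from $A_{m+1}$. Since $\mathscr D_m^*=\mathscr D_m$ by the first part, every element of $\mathscr D_{m+1}$ is a child of some element of $\mathscr D_m^*$, so $\mathscr D_{m+1}^*$ is obtained from $\mathscr D_{m+1}$ by deleting exactly these two sequences.

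The main technical point is the monotonicity estimate and the rigidity of its equality case, which forces a single collision at $k=1$. The role of the parity hypothesis is confined to the final coefficient check in the $n=m+1$ case: for odd $m$ the two collided sequences would reinforce rather than cancel, which is consistent with the later claim that $\|\nu_\beta^{(n)}\|\equiv 1$ for odd $m$.
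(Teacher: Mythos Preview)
Your argument is correct and follows essentially the same route as the paper: both establish that the lexicographic order on $\mathscr D_n$ is carried to the real order on $\{x_{\boldsymbol\varepsilon}\}$ via the estimate $\beta^{-k}-\sum_{j=k+1}^n\beta^{-j}\ge 0$ (the paper phrases this as $\rho-\sum_{j=2}^n\rho^j\ge\rho^{n+1}$ in its Lemma~5, a slightly sharper but equivalent inequality for the present purpose), and then identify the single equality case at $n=m+1$, $k=1$ using $S_m=1$ together with the parity of $m$. One cosmetic slip: in your parenthetical verification, $x_{\boldsymbol\varepsilon}$ actually equals $-(2\beta^{-1}-S_{m+1})$ rather than $2\beta^{-1}-S_{m+1}$, but since this quantity vanishes the conclusion $x_{\boldsymbol\varepsilon}=x_{\boldsymbol\varepsilon'}=0$ is unaffected.
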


\begin{lem}[Tree isomorphism]\label{lem:order-preserve}
For any $n\ge 0$, the map
\begin{align*}
x_{\boldsymbol\cdot}: 
\mathscr D_{n}^*\rightarrow A_n,\ 
\boldsymbol\varepsilon^*\mapsto x_{\boldsymbol\varepsilon^*}
\end{align*}
is bijective and order-preserving, that is,
$$\boldsymbol\varepsilon^*_a<\boldsymbol\varepsilon^*_b\ \Rightarrow\ 
x_{\boldsymbol\varepsilon^*_a}<x_{\boldsymbol\varepsilon^*_b}.$$ 
\end{lem}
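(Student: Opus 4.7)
The plan is to prove Lemma \ref{lem:order-preserve} by induction on $n$, strengthening the statement with an auxiliary gap estimate: for any two consecutive elements $\boldsymbol\varepsilon^*_a < \boldsymbol\varepsilon^*_b$ of $\mathscr D_n^*$ in the induced lex order, $x_{\boldsymbol\varepsilon^*_b} - x_{\boldsymbol\varepsilon^*_a} \ge 2\beta^{-(n+1)}$. The base cases $n \le m+1$ are handled directly: for $\boldsymbol\varepsilon_a < \boldsymbol\varepsilon_b$ in $\mathscr D_n$ with first differing index $j_0$,
\[x_{\boldsymbol\varepsilon_b} - x_{\boldsymbol\varepsilon_a} \ge 2\beta^{-j_0}\left(1 - \sum_{k=1}^{n-j_0}\beta^{-k}\right),\]
and dividing \eqref{evenacci} by $\beta^m$ gives $\sum_{k=1}^m\beta^{-k} = 1$, so the right side is strictly positive whenever $n-j_0 < m$. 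This covers all pairs for $n \le m$; the only borderline case $n-j_0 = m$ occurs at $n=m+1$, $j_0 = 1$, where the equality forces $\boldsymbol\varepsilon_a = (-,+,\ldots,+)$ and $\boldsymbol\varepsilon_b = (+,-,\ldots,-)$, which is precisely the pair pruned by Lemma \ref{lem:diamond}.

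For the inductive step, assume the bijection, order preservation, and the gap estimate at level $n$. Two distinct elements of $\mathscr D_{n+1}^*$ are either siblings (differing by $2\beta^{-(n+1)}$ in the expected order) or come from distinct parents $\boldsymbol\varepsilon^*_a < \boldsymbol\varepsilon^*_b$ in $\mathscr D_n^*$. In the second case, the only potentially out-of-order pair is $\boldsymbol\varepsilon^*_a+$ versus $\boldsymbol\varepsilon^*_b-$, for which the inductive gap estimate gives $x_{\boldsymbol\varepsilon^*_b-} - x_{\boldsymbol\varepsilon^*_a+} = (x_{\boldsymbol\varepsilon^*_b} - x_{\boldsymbol\varepsilon^*_a}) - 2\beta^{-(n+1)} \ge 0$. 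Strict inequality preserves the order, and propagating the gap bound is routine. The delicate case is equality, which produces a collision $x_{\boldsymbol\varepsilon^*_a+} = x_{\boldsymbol\varepsilon^*_b-}$; then the inductive step reduces to showing that the two corresponding terms in \eqref{eq:expansion} cancel, so the common $x$-value is not in $A_{n+1}$ and both offending children are pruned from $\mathscr D_{n+1}^*$.

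The cancellation is the main obstacle. From $x_{\boldsymbol\varepsilon^*_b} - x_{\boldsymbol\varepsilon^*_a} = 2\beta^{-(n+1)}$, multiplying by $\beta^{n+1}$ yields a $\{-1,0,1\}$-valued linear combination of $1, \beta, \ldots, \beta^n$ equal to $1$, and reducing modulo the minimal polynomial $\beta^m - \beta^{m-1} - \cdots - 1$ of $\beta$ shows that the only such representations are shifts of $1 = \beta^m - \beta^{m-1} - \cdots - \beta$. Consequently $\boldsymbol\varepsilon^*_a$ and $\boldsymbol\varepsilon^*_b$ must agree outside a window of $m$ consecutive coordinates inside which $\boldsymbol\varepsilon^*_a$ reads $(-,+,\ldots,+)$ and $\boldsymbol\varepsilon^*_b$ reads $(+,-,\ldots,-)$, so that $\boldsymbol\varepsilon^*_a+$ and $\boldsymbol\varepsilon^*_b-$ exhibit, within a length-$(m+1)$ window, the forbidden block of Lemma \ref{lem:diamond}. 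A direct computation then shows that the ratio of the two sign products is $(-1)^{m+1} = -1$ (where the evenness of $m$ enters decisively), so the corresponding coefficients are exact negatives and cancel. Surjectivity $x_\cdot(\mathscr D_{n+1}^*) = A_{n+1}$ follows because the same analysis, applied to children of sequences pruned at earlier levels, inductively shows that every pruned lineage pairs off into cancelling collisions and contributes zero to the expansion; hence any point of $A_{n+1}$ must have a representative among the children of $\mathscr D_n^*$. The crux of the difficulty is the classification of $\{-1,0,1\}$-expansions of $1$ in $\mathbb{Z}[\beta]$: ruling out exotic combinations of shifts of the basic relation is where the Pisot/minimality structure of the multinacci $\beta$ enters critically.
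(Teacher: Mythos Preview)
Your overall architecture --- induction on $n$ with a gap estimate $x_{\boldsymbol\varepsilon_b^*}-x_{\boldsymbol\varepsilon_a^*}\ge 2\beta^{-(n+1)}$ for consecutive elements --- is essentially the paper's separation property (its Lemma~\ref{lem:non-collision-2}), and your base cases match the paper's Lemma~\ref{lem:top-pyramid}. The divergence, and the genuine gap, is in how you handle the equality case.

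You assert that ``reducing modulo the minimal polynomial \ldots shows that the only such representations are shifts of $1=\beta^m-\beta^{m-1}-\cdots-\beta$.'' This is false as a statement about $\{-1,0,1\}$-expansions in $\mathbb Z[\beta]$. Already for $m=2$ one has $\beta^4-\beta^3-\beta=1$ (since $\beta^4-\beta^3=\beta^2$ and $\beta^2-\beta=1$), and more generally one can concatenate shifts of the basic relation to produce infinitely many $\{-1,0,1\}$-representations of $1$ that are not single shifts. Translating back, the pattern $\eta=(1,-1,0,-1)$ at level $n=4$ would correspond to $\boldsymbol\varepsilon_a=(-,+,s,+)$, $\boldsymbol\varepsilon_b=(+,-,s,-)$ with a common digit $s$; these do \emph{not} differ only in a single window of length $m$. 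It happens that no common $s$ places both of these in $\mathscr D_4^*$ (because $(-,+,+)$ and $(+,-,-)$ were pruned at level $3$), but that is a fact about $\mathscr D_n^*$, not about $\mathbb Z[\beta]$, and it is precisely what you need to prove rather than assume. The ``Pisot/minimality'' incantation does not by itself rule these out.

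The paper avoids this algebraic classification entirely. Its mechanism is geometric: once the colliding pair $(-,+,\dots,+)$, $(+,-,\dots,-)$ is removed at depth $m+1$ below any node $\boldsymbol\varepsilon_\circ^*$, the two remaining subtrees are separated by a gap of $4\rho^{m+1}$ (times $\rho^{|\boldsymbol\varepsilon_\circ^*|}$), while the tail estimate $\sum_{j>m+1}\rho^j<2\rho^{m+1}$ (Lemma~\ref{lem:tail-gap}) bounds the total future spread of each subtree by strictly less than half that gap. Hence no further collision can ever occur across that boundary, regardless of what $\{-1,0,1\}$-relations exist in $\mathbb Z[\beta]$. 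That tail-versus-gap comparison is the missing idea in your proposal; it is what makes ``propagating the gap bound'' actually work and what replaces the unattainable algebraic classification you invoke.
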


\begin{lem}[Leaflessness]\label{lem:leafless}
Each $\boldsymbol\varepsilon^*\in \mathscr T^*$ has at least one child in $\mathscr T^*$.
\end{lem}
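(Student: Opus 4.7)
The plan is to argue by contradiction, handling $n\le m$ directly via Lemma \ref{lem:diamond} and reducing the case $n\ge m+1$ to a structural analysis anchored in Lemma \ref{lem:order-preserve}.

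For $n\le m-1$, Lemma \ref{lem:diamond} yields $\mathscr D_{n+1}^*=\mathscr D_{n+1}$, so both children of $\boldsymbol\varepsilon^*$ lie in $\mathscr T^*$. For $n=m$, the only pruned children at level $m+1$ are $(-,+,\ldots,+)$ and $(+,-,\ldots,-)$, and their parents in $\mathscr D_m^*=\mathscr D_m$ each retain the complementary child. Now suppose $n\ge m+1$ and, for contradiction, that neither $\boldsymbol\varepsilon^*-$ nor $\boldsymbol\varepsilon^*+$ lies in $\mathscr D_{n+1}^*$. Applying the convolution recursion
$$\nu_\beta^{(n+1)}(\{y\})=\tfrac12\nu_\beta^{(n)}(\{y-\beta^{-(n+1)}\})-\tfrac12\nu_\beta^{(n)}(\{y+\beta^{-(n+1)}\})$$
at both $y=x_{\boldsymbol\varepsilon^*}+\beta^{-(n+1)}$ and $y=x_{\boldsymbol\varepsilon^*}-\beta^{-(n+1)}$, and using $\nu_\beta^{(n)}(\{x_{\boldsymbol\varepsilon^*}\})\ne 0$, I deduce that both $x_{\boldsymbol\varepsilon^*}\pm 2\beta^{-(n+1)}$ belong to $A_n$ and share the same signed mass as $x_{\boldsymbol\varepsilon^*}$. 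Lemma \ref{lem:order-preserve} then supplies unique $\boldsymbol\delta^{*+},\boldsymbol\delta^{*-}\in\mathscr D_n^*$ with $x_{\boldsymbol\delta^{*\pm}}-x_{\boldsymbol\varepsilon^*}=\pm 2\beta^{-(n+1)}$.

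Next, the plan is to extract tail information about $\boldsymbol\varepsilon^*$ from the existence of $\boldsymbol\delta^{*\pm}$. The identity
$$\beta^{-(n+1)}=\beta^{-(n-m+1)}-\beta^{-(n-m+2)}-\cdots-\beta^{-n}\qquad (n\ge m),$$
which follows by iterating $\beta^{-k}=\beta^{-(k+1)}+\cdots+\beta^{-(k+m)}$ (a rewriting of $\beta^m=\beta^{m-1}+\cdots+\beta+1$), provides a canonical $\{-1,0,1\}$-representation of $\beta^{-(n+1)}$ in terms of $\beta^{-1},\ldots,\beta^{-n}$. Under this canonical representation, $\boldsymbol\delta^{*+}$ differs from $\boldsymbol\varepsilon^*$ only in positions $n-m+1,\ldots,n$, where $\boldsymbol\varepsilon^*$ must display the tail $(-,+,\ldots,+)$ (changed to $(+,-,\ldots,-)$ in $\boldsymbol\delta^{*+}$). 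Symmetrically, existence of $\boldsymbol\delta^{*-}$ forces $\boldsymbol\varepsilon^*$ to have tail $(+,-,\ldots,-)$ of length $m$. These two tails are mutually exclusive, delivering the desired contradiction.

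The hard part is to verify that only the canonical representation can be realized by $\boldsymbol\delta^{*\pm}\in\mathscr D_n^*$. A priori, $\boldsymbol\delta^{*+}-\boldsymbol\varepsilon^*$ could correspond to a different $\{-1,0,1\}$-representation, obtained by adding shifted basic relations $\beta^{-k}-\beta^{-(k+1)}-\cdots-\beta^{-(k+m)}=0$ to the canonical one. My expectation is that each such alternative candidate for $\boldsymbol\delta^{*+}$ would carry a flip block $(+,-,\ldots,-)$ or $(-,+,\ldots,+)$ of length $m+1$, and that flipping this block produces another element of $\mathscr D_n$ with the same $x$-value but opposite sign; combined with the bijectivity from Lemma \ref{lem:order-preserve} and a careful signed-mass accounting, this should preclude the alternative candidate from lying in $\mathscr D_n^*$. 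Carrying out this combinatorial bookkeeping is the principal technical step needed to close the argument.
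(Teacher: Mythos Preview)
Your proposal has the right overall shape --- force incompatible tail conditions on $\boldsymbol\varepsilon^*$ --- but it is incomplete exactly where you flag it: you have not shown that the unique $\boldsymbol\delta^{*\pm}\in\mathscr D_n^*$ with the prescribed $x$-value must arise from the canonical $\{-1,0,1\}$-representation of $\pm\beta^{-(n+1)}$. The difficulty is genuine: there \emph{are} other $\{-1,0,1\}$-representations of $\beta^{-(n+1)}$ in terms of $\beta^{-1},\ldots,\beta^{-n}$, and your sketch for ruling them out (flip a block of length $m+1$, observe an opposite sign, invoke bijectivity from Lemma~\ref{lem:order-preserve}) is not a proof. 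The flip produces an element of $\mathscr D_n$, not of $\mathscr D_n^*$, so bijectivity on $\mathscr D_n^*$ does not directly exclude it; and iterating such flips can lead to a cascade of alternative candidates whose membership in $\mathscr D_n^*$ you would need to control inductively. Without this step you cannot conclude that $\boldsymbol\varepsilon^*$ has either specified tail.

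The paper closes this gap differently and much more cheaply. Rather than analyzing representations, it first establishes a \emph{separation property} (Lemma~\ref{lem:non-collision-2}): for $\boldsymbol\varepsilon_a^*<\boldsymbol\varepsilon_b^*$ in $\mathscr D_n^*$ and any $k$-th descendants $\boldsymbol\varepsilon_{a'}^*,\boldsymbol\varepsilon_{b'}^*$, one has $x_{\boldsymbol\varepsilon_{a'}^*+}\le x_{\boldsymbol\varepsilon_{b'}^*-}$, with strict inequality once $k\ge m$. This localizes every cancellation: if $\boldsymbol\varepsilon^*-$ is pruned, the cancelling partner must lie within the subtree rooted at the $m$-th ancestor $\boldsymbol\varepsilon_\circ^*$ of $\boldsymbol\varepsilon^*$. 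Inside that subtree the first-pruning analysis (Lemma~\ref{lem:diamond}) applies after a shift, forcing $\boldsymbol\varepsilon^*=\boldsymbol\varepsilon_\circ^*+-\cdots-$; this is the content of Lemma~\ref{lem:one-child}. The symmetric statement for $\boldsymbol\varepsilon^*+$ forces the incompatible tail $\boldsymbol\varepsilon_\circ^*-+\cdots+$, yielding the contradiction. The separation property is precisely what replaces your unfinished combinatorial bookkeeping; either invoke it, or prove an equivalent localization statement before attempting to pin down the tails.
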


\section{Proof of Lemma \ref{lem:diamond}}\label{sec:diamond}
Lemma \ref{lem:diamond} follows momentarily from the following lemma.

\begin{lem}\label{lem:top-pyramid}
Suppose $\boldsymbol\varepsilon_a,\boldsymbol\varepsilon_b\in\mathscr D_{n}$.\\
(i) If $n\le m$, then
$$\boldsymbol\varepsilon_a<\boldsymbol\varepsilon_b
\ \Rightarrow\ x_{\boldsymbol\varepsilon_a}<x_{\boldsymbol\varepsilon_b}.$$
(ii) If $n=m+1$, then
$$\boldsymbol\varepsilon_a<\boldsymbol\varepsilon_b
\ \Rightarrow\ x_{\boldsymbol\varepsilon_a}\le x_{\boldsymbol\varepsilon_b}$$
and equality holds exactly when
\begin{align*}
\boldsymbol\varepsilon_a=(-,+,\cdots,+),\quad 
\boldsymbol\varepsilon_b=(+,-,\cdots,-).
\end{align*}
\end{lem}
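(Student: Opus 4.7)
The plan is to rewrite the defining relation $\beta^m=\beta^{m-1}+\cdots+\beta+1$ in its more useful form
\begin{equation*}
\sum_{j=1}^{m}\beta^{-j}=1,
\end{equation*}
obtained by dividing through by $\beta^m$. This single identity drives both parts.

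Given $\boldsymbol\varepsilon_a<\boldsymbol\varepsilon_b$ in $\mathscr D_n$, let $k$ be the first index where they differ, so that $\varepsilon_{a,k}=-1$ and $\varepsilon_{b,k}=+1$. Then
\begin{equation*}
x_{\boldsymbol\varepsilon_b}-x_{\boldsymbol\varepsilon_a}
=2\beta^{-k}+\sum_{j=k+1}^{n}(\varepsilon_{b,j}-\varepsilon_{a,j})\beta^{-j}
\ge 2\beta^{-k}-2\sum_{j=k+1}^{n}\beta^{-j}
=2\beta^{-k}\left(1-\sum_{i=1}^{n-k}\beta^{-i}\right),
\end{equation*}
with equality throughout iff $\varepsilon_{b,j}-\varepsilon_{a,j}=-2$ for every $j>k$, that is, $\varepsilon_{a,j}=+1$ and $\varepsilon_{b,j}=-1$ for $j=k+1,\dots,n$.

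For part (i), since $1\le k\le n\le m$, the truncated sum satisfies $n-k\le m-1$, so $\sum_{i=1}^{n-k}\beta^{-i}<\sum_{i=1}^{m}\beta^{-i}=1$; the bracketed factor is strictly positive and strict inequality $x_{\boldsymbol\varepsilon_a}<x_{\boldsymbol\varepsilon_b}$ follows. For part (ii) with $n=m+1$, we have $n-k\le m$, so the bracket is $\ge 0$, giving $x_{\boldsymbol\varepsilon_a}\le x_{\boldsymbol\varepsilon_b}$. The bracket vanishes precisely when $n-k=m$, i.e.\ $k=1$; combined with the equality analysis above, this forces $\boldsymbol\varepsilon_a=(-,+,\dots,+)$ and $\boldsymbol\varepsilon_b=(+,-,\dots,-)$, and in this case a direct computation using $\sum_{j=2}^{m+1}\beta^{-j}=\beta^{-1}\sum_{j=1}^{m}\beta^{-j}=\beta^{-1}$ confirms $x_{\boldsymbol\varepsilon_b}-x_{\boldsymbol\varepsilon_a}=2\beta^{-1}-2\beta^{-1}=0$.

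There is no substantial obstacle; the only thing to be careful about is the equality analysis in (ii). One must check both that the lower bound $2\beta^{-k}(1-\sum_{i=1}^{n-k}\beta^{-i})$ is actually attained (which requires $\varepsilon_{b,j}-\varepsilon_{a,j}=-2$ for all $j>k$, a very restrictive condition) and that the bracket itself vanishes (which requires $k=1$ and uses the multinacci identity); combining these two constraints uniquely pins down the extremal pair.
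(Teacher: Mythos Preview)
Your proof is correct and follows essentially the same approach as the paper's: both identify the first index of disagreement, bound the remaining tail from below by replacing each difference by $-2$, and use the multinacci identity to control the resulting partial sum. The paper packages the key estimate as a separate lemma ($\rho-\sum_{j=2}^{n}\rho^{j}\ge\rho^{n+1}$ for $n\le m$), while you invoke $\sum_{j=1}^{m}\beta^{-j}=1$ directly, but the computation and the equality analysis are the same.
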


\begin{proof}[Lemma \ref{lem:top-pyramid} $\Rightarrow$ Lemma \ref{lem:diamond}]
Combining \eqref{eq:expansion} and part (i) of Lemma \ref{lem:top-pyramid}, we see that $|A_n|=2^n$ holds when $n\le m$. Therefore, by the definition of $\mathscr D_{n}^*$,
$$\mathscr D_{n}^*=\mathscr D_{n}$$
for all $n\le m$. If $n=m+1$, then by part (ii) of Lemma \ref{lem:top-pyramid}, the map
$$x_{\boldsymbol\cdot}: 
\mathscr D_{n}\backslash\{(-,+,\cdots,+)\}\rightarrow\R$$
is injective; moreover, since $m$ is even, in \eqref{eq:expansion} the Dirac measure at $x_{(-,+,\cdots,+)}$ cancels that at $x_{(+,-,\cdots,-)}$ due to overlap and opposite signs of coefficients. From this it follows that
$$|A_{n}|=2^n-1$$
and 
$$\mathscr D_{n}^*=\mathscr D_{n}\backslash\{(-,+,\cdots,+),(+,-,\cdots,-)\}.$$
This proves Lemma \ref{lem:diamond} assuming the truth of Lemma \ref{lem:top-pyramid}.
\end{proof}

It remains to prove Lemma \ref{lem:top-pyramid}. The proof is based on the following lemma. For convenience, we will write
$$\rho=\beta^{-1}.$$

\begin{lem}\label{lem:1st-collision}
Suppose $\beta>1$ satisfies \eqref{evenacci} for some integer $m\ge 2$. Then for any $n\le m$, we have
$$\rho-\sum_{j=2}^n \rho^{j}\ge \rho^{n+1}.$$
\end{lem}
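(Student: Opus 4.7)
The plan is to reduce the inequality to a clean statement about the partial sums of a geometric-like series, and then use the defining equation \eqref{evenacci} rewritten in terms of $\rho$.

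First, I would divide both sides of \eqref{evenacci} by $\beta^m$ to obtain the identity
\begin{equation*}
\rho+\rho^2+\cdots+\rho^m=1,
\end{equation*}
which is the only property of $\beta$ that enters the argument. Note that no parity assumption on $m$ is used here, which matches the fact that Lemma \ref{lem:1st-collision} is stated for general $m\ge 2$.

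Next, I would rewrite the desired inequality $\rho-\sum_{j=2}^n\rho^j\ge \rho^{n+1}$ by moving $\rho^{n+1}$ to the left, giving $\rho\ge \sum_{j=2}^{n+1}\rho^j$, and then dividing through by $\rho>0$ to obtain the equivalent form
\begin{equation*}
\sum_{k=1}^{n}\rho^k\le 1.
\end{equation*}

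Finally, since $n\le m$ and $\rho>0$, the partial sum $\sum_{k=1}^n\rho^k$ is bounded above by the full sum $\sum_{k=1}^m\rho^k$, which equals $1$ by the identity above. This yields the claim (with equality precisely when $n=m$). There is no real obstacle here; the only ``idea'' is the rearrangement to the form $\sum_{k=1}^n\rho^k\le 1$, after which the conclusion is immediate from the reciprocal of \eqref{evenacci}.
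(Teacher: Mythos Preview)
Your proof is correct and is essentially the same as the paper's: both arguments rest on the identity $\rho+\rho^2+\cdots+\rho^m=1$ obtained from \eqref{evenacci}, together with the trivial bound $\sum_{k=1}^n\rho^k\le\sum_{k=1}^m\rho^k$ for $n\le m$. The only cosmetic difference is that the paper writes $1-\rho-\cdots-\rho^{n-1}=\rho^n+\cdots+\rho^m\ge\rho^n$ and then multiplies by $\rho$, whereas you first divide the target inequality by $\rho$ and then invoke the same identity.
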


\begin{proof}
By \eqref{evenacci}, we have
$$1-\rho-\cdots-\rho^m=0.$$
Therefore, if $n\le m$, then
$$1-\rho-\cdots-\rho^{n-1}=\rho^{n}+\cdots+\rho^m\ge \rho^n.$$
Multiplying both sides by $\rho$, we obtain the desired bound.
\end{proof}

We can now prove Lemma \ref{lem:top-pyramid}. 

\begin{proof}[Proof of Lemma \ref{lem:top-pyramid}] Write 
$$\boldsymbol\varepsilon_b-\boldsymbol\varepsilon_a=2(\eta_1,\cdots,\eta_n)$$
where each $\eta_j\in\{\pm 1,0\}$. Suppose $\eta_{j_1}$ is the first nonzero component of $(\eta_1,\cdots,\eta_n)$, then, since $\boldsymbol\varepsilon_a<\boldsymbol\varepsilon_b$, we must have
$$\eta_{j_1}=1.$$ 
Correspondingly,
\begin{align*}
x_{\boldsymbol\varepsilon_b}-x_{\boldsymbol\varepsilon_a}
&=2\sum_{j=j_1}^n \eta_j \rho^{j}\\
&=2\Big (\rho^{j_1}+\sum_{j=j_1+1}^n \eta_j \rho^{j}\Big ).
\end{align*}
(i) If $n\le m$, then by Lemma \ref{lem:1st-collision}, 
\begin{align*}
\rho^{j_1}+\sum_{j=j_1+1}^n \eta_j \rho^{j}
&\ge \rho^{j_1}-\sum_{j=j_1+1}^n \rho^{j}\\
&\ge \rho^{n+1}.
\end{align*}
Thus $x_{\boldsymbol\varepsilon_b}-x_{\boldsymbol\varepsilon_a}>0$.\\
(ii) If $n=m+1$ and $j_1>1$, then the same argument shows that $x_{\boldsymbol\varepsilon_b}-x_{\boldsymbol\varepsilon_a}>0$. If $n=m+1$ and $j_1=1$, then 
\begin{align*}
\rho^{j_1}+\sum_{j=j_1+1}^{n} \eta_j \rho^{j}
&=\rho+\sum_{j=2}^{m+1} \eta_j \rho^{j}\\
&\ge \rho-\sum_{j=2}^{m+1} \rho^{j}\\
&=0.
\end{align*}
Moreover, the inequality above is strict unless $\eta_2=\cdots=\eta_n=-1$, which corresponds to the case 
\begin{align*}
\boldsymbol\varepsilon_a=(-,+,\cdots,+),\quad 
\boldsymbol\varepsilon_b=(+,-,\cdots,-).
\end{align*}
Therefore, except for this case (where $x_{\boldsymbol\varepsilon_a}=x_{\boldsymbol\varepsilon_b}$) we always have $x_{\boldsymbol\varepsilon_a}<x_{\boldsymbol\varepsilon_b}$. This completes the proof of Lemma \ref{lem:top-pyramid}.
\end{proof}

\section{Proof of Lemma \ref{lem:order-preserve}}\label{sec:order}
To prove Lemma \ref{lem:order-preserve}, we will use the following.

\begin{lem}\label{lem:tail-gap}
Suppose $\beta>1$ satisfies \eqref{evenacci} for some integer $m\ge 2$. Then for any $n\ge 0$, we have
$$\sum_{j=n+1}^\infty \rho^{j}< 2\rho^{n}.$$
\end{lem}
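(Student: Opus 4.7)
The plan is to evaluate the left-hand side as a geometric series and thereby reduce the claim to a single, $n$-free inequality about $\rho$. Since $\rho = \beta^{-1} \in (0,1)$, the tail sum is
$$\sum_{j=n+1}^\infty \rho^j = \frac{\rho^{n+1}}{1-\rho}.$$
Dividing both sides of the desired inequality by $\rho^n$, the assertion becomes equivalent to
$$\frac{\rho}{1-\rho} < 2, \quad \text{i.e.,}\quad \rho < \frac{2}{3}.$$
Notice that $n$ has disappeared entirely, so the lemma is really a uniform upper bound on $\rho$.

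It remains to show that the hypothesis \eqref{evenacci} with $m \ge 2$ forces $\rho < 2/3$. Dividing \eqref{evenacci} through by $\beta^m$ gives
$$\rho + \rho^2 + \cdots + \rho^m = 1.$$
Since every term is positive and $m \ge 2$, this implies $\rho + \rho^2 \le 1$, so the positive quantity $\rho$ satisfies $\rho^2 + \rho - 1 \le 0$, hence $\rho \le (\sqrt{5}-1)/2$. This last number is strictly less than $2/3$ (the inequality $3(\sqrt{5}-1) < 4$ reduces after squaring to $45 < 49$), so the chain $\rho \le (\sqrt{5}-1)/2 < 2/3$ closes the argument.

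The substantive step is the reduction to an $n$-independent bound on $\rho$; after that the estimate is elementary. I do not anticipate any real obstacle — the strict inequality in the conclusion is inherited from the strict inequality $(\sqrt{5}-1)/2 < 2/3$, and holds for every admissible $m \ge 2$ (for larger $m$ one in fact has $\rho$ even closer to $1/2$, so the bound only becomes easier).
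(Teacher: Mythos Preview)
Your proof is correct and follows essentially the same route as the paper's: both reduce to the $n$-independent inequality $\frac{\rho}{1-\rho} < 2$. The paper finishes by rewriting this as $\rho^m < \tfrac{1}{2}$ (equivalently $\beta^m > 2$, which is immediate from \eqref{evenacci} since $\beta>1$), whereas you truncate the sum to $\rho + \rho^2 \le 1$ and solve the quadratic; both closing steps are equally elementary.
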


\begin{proof}
It suffices to consider the case $n=0$. We need to show
$$\sum_{j=1}^\infty \rho^{j}=\frac{\rho}{1-\rho}<2.$$
However, by \eqref{evenacci},
$$1=\rho+\cdots+\rho^m=\frac{\rho}{1-\rho}(1-\rho^m).$$
Therefore, it suffices to show $\rho^m<{1}/{2}$, or equivalently, $\beta^m>2$. But this follows immediately from \eqref{evenacci} and the assumption $\beta>1$.
\end{proof}

Lemma \ref{lem:tail-gap} implies the following. 

\begin{lem}\label{lem:non-collision-1}
For any $n\ge 1$ and $\boldsymbol\varepsilon_a^*, \boldsymbol\varepsilon_b^*\in\mathscr D_{n}^*$,
$$\boldsymbol\varepsilon_a^*\in\mathscr T^*(-),\ \boldsymbol\varepsilon_b^*\in\mathscr T^*(+)
\ \Rightarrow\ x_{\boldsymbol\varepsilon_a^*+}\le x_{\boldsymbol\varepsilon_b^*-}$$	
and equality holds if and only if $n=m$ and
$$\boldsymbol\varepsilon_a^*=(-,+,\cdots,+),\quad 
\boldsymbol\varepsilon_b^*=(+,-,\cdots,-).$$
\end{lem}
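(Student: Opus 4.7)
My plan is to translate the inequality into a gap estimate and then split by whether $n$ is small or large. Writing $\rho=\beta^{-1}$, the identity
$$x_{\boldsymbol\varepsilon_b^*-}-x_{\boldsymbol\varepsilon_a^*+}=\bigl(x_{\boldsymbol\varepsilon_b^*}-x_{\boldsymbol\varepsilon_a^*}\bigr)-2\rho^{n+1}$$
reduces the claim to $x_{\boldsymbol\varepsilon_b^*}-x_{\boldsymbol\varepsilon_a^*}\ge 2\rho^{n+1}$, with equality only in the asserted case. Note that since $\boldsymbol\varepsilon_a^*$ begins with $-1$ and $\boldsymbol\varepsilon_b^*$ with $+1$, we have $\boldsymbol\varepsilon_a^*+<\boldsymbol\varepsilon_b^*-$ lexicographically in $\mathscr D_{n+1}$.

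For $n\le m$ the argument is short: both $\boldsymbol\varepsilon_a^*+$ and $\boldsymbol\varepsilon_b^*-$ lie in $\mathscr D_{n+1}$ with length $n+1\le m+1$, so Lemma~\ref{lem:top-pyramid} applies. Part (i) yields strict inequality when $n\le m-1$; part (ii) yields $x_{\boldsymbol\varepsilon_a^*+}\le x_{\boldsymbol\varepsilon_b^*-}$ when $n=m$, with equality precisely when $\boldsymbol\varepsilon_a^*+=(-,+,\dots,+)$ and $\boldsymbol\varepsilon_b^*-=(+,-,\dots,-)$ of length $m+1$; removing the trailing coordinate of each recovers exactly the equality case claimed in the lemma.

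The case $n\ge m+1$ is the crux, as Lemma~\ref{lem:top-pyramid} no longer applies to words of length $n+1>m+1$. Here I would use that $\mathscr T^*$ is prefix-closed (immediate from the inductive definition of $\mathscr D_n^*$): the length-$(m+1)$ prefix of $\boldsymbol\varepsilon_a^*$ lies in $\mathscr D_{m+1}^*$ and therefore, by Lemma~\ref{lem:diamond}, cannot equal $(-,+,\dots,+)$; likewise the prefix of $\boldsymbol\varepsilon_b^*$ cannot equal $(+,-,\dots,-)$. Consequently, writing
$$x_{\boldsymbol\varepsilon_b^*-}-x_{\boldsymbol\varepsilon_a^*+}=2\rho+\sum_{j=2}^{n}(\varepsilon_j'-\varepsilon_j)\rho^j-2\rho^{n+1},$$
one cannot have $\varepsilon_j'-\varepsilon_j=-2$ simultaneously at every $j\in\{2,\dots,m+1\}$. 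A short case analysis, comparing placements of the two required ``flips'' at the same or at distinct positions, shows that under these constraints
$$\sum_{j=2}^{m+1}(\varepsilon_j'-\varepsilon_j)\rho^j\ge 4\rho^{m+1}-2\sum_{j=2}^{m+1}\rho^j,$$
the minimum being attained when both flips occur at $j=m+1$.

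Combining this with the trivial bound $\sum_{j=m+2}^{n}(\varepsilon_j'-\varepsilon_j)\rho^j\ge -2\sum_{j=m+2}^{n}\rho^j$ gives
$$x_{\boldsymbol\varepsilon_b^*-}-x_{\boldsymbol\varepsilon_a^*+}\ge 2\Bigl(\rho+2\rho^{m+1}-\sum_{j=2}^{n+1}\rho^j\Bigr).$$
Using the arithmetic identities $\sum_{j=1}^m\rho^j=1$ and $\rho^{m+1}=2\rho-1$ (both easy consequences of \eqref{evenacci}), the right-hand side simplifies to $2\bigl(2\rho^{m+1}-\sum_{j=m+2}^{n+1}\rho^j\bigr)$, which is strictly positive by Lemma~\ref{lem:tail-gap} applied with $n=m+1$. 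The main obstacle I anticipate is the constrained minimization step: one must carefully rule out the many competing configurations (with extra flips or with the two required flips placed at distinct positions) to justify that concentrating both flips at $j=m+1$ is optimal.
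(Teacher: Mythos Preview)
Your argument is correct and, under the surface, follows the same route as the paper. For $n\le m$ both proofs simply invoke Lemma~\ref{lem:top-pyramid}. For $n\ge m+1$, the paper passes to the length-$(m{+}1)$ ancestors $\boldsymbol\varepsilon_{a_\circ}^*,\boldsymbol\varepsilon_{b_\circ}^*\in\mathscr D_{m+1}^*$, uses Lemma~\ref{lem:diamond} to exclude the extremal pair, and then combines the resulting prefix gap $x_{\boldsymbol\varepsilon_{b_\circ}^*}-x_{\boldsymbol\varepsilon_{a_\circ}^*}\ge 4\rho^{m+1}$ with the tail bound from Lemma~\ref{lem:tail-gap}. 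Your computation unwinds exactly these two pieces: the constrained minimum $\sum_{j=2}^{m+1}(\varepsilon_j'-\varepsilon_j)\rho^j\ge 4\rho^{m+1}-2\sum_{j=2}^{m+1}\rho^j$ is precisely the statement that the prefix gap is at least $4\rho^{m+1}$, and your trivial bound on $\sum_{j=m+2}^{n}$ plays the role of the tail estimate.

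Your anticipated obstacle is not one. Writing the constrained sum as
\[
\sum_{j=2}^{m+1}(\varepsilon_j'-\varepsilon_j)\rho^j=-2\sum_{j=2}^{m+1}\rho^j+2\sum_{j\in F_a}\rho^j+2\sum_{j\in F_b}\rho^j,
\]
where $F_a=\{j:\varepsilon_j=-1\}$ and $F_b=\{j:\varepsilon_j'=+1\}$ are nonempty subsets of $\{2,\dots,m+1\}$, makes the minimization a one-liner: each of the last two sums is at least $\rho^{m+1}$. This is exactly how the paper identifies the extremal prefixes $(-,+,\dots,+,-)$ and $(+,-,\dots,-,+)$. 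So the ``many competing configurations'' collapse immediately, and no case analysis is needed.
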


\begin{proof}
By Lemma \ref{lem:diamond} and Lemma \ref{lem:top-pyramid}, the statement holds for $n\le m$. Suppose $n\ge m+1$. Then there exist 
$$\boldsymbol\varepsilon_{a_\circ}^*\in\mathscr T^*(-;m),\quad  \boldsymbol\varepsilon_{b_\circ}^*\in\mathscr T^*(+;m)$$
such that
$$\boldsymbol\varepsilon_{a}^*+\,\in\mathscr T(\boldsymbol\varepsilon_{a_\circ}^*),\quad  \boldsymbol\varepsilon_{b}^*-\,\in\mathscr T(\boldsymbol\varepsilon_{b_\circ}^*).$$
It follows from \eqref{eq:x_eps} and Lemma \ref{lem:tail-gap} that
\begin{align*}
|x_{\boldsymbol\varepsilon_{a}^*+}-x_{\boldsymbol\varepsilon_{a_\circ}^*}|,\  |x_{\boldsymbol\varepsilon_{b}^*-}-x_{\boldsymbol\varepsilon_{b_\circ}^*}|
< \sum_{j=m+2}^{\infty} \rho^{j}
< 2\rho^{m+1}.
\end{align*}
On the other hand, by Lemma \ref{lem:diamond},
$$(-,+,\cdots,+)\notin\mathscr T^*(-;m),\quad 
(+,-,\cdots,-)\notin\mathscr T^*(+;m);$$
hence
\begin{align*}
x_{\boldsymbol\varepsilon_{b_\circ}^*}-x_{\boldsymbol\varepsilon_{a_\circ}^*}
&\ge x_{\boldsymbol\varepsilon_{(+,-,\cdots,-,+)}^*}-x_{\boldsymbol\varepsilon_{(-,+,\cdots,+,-)}^*}\\
&=2\Big (\rho-\sum_{j=2}^{m} \rho^{j}\Big )+2 \rho^{m+1}\\
&=4 \rho^{m+1}.
\end{align*}
Combining these, we get
\begin{align*}
x_{\boldsymbol\varepsilon_{b}^*-}-x_{\boldsymbol\varepsilon_{a}^*+}
&=(x_{\boldsymbol\varepsilon_{b}^*-}-x_{\boldsymbol\varepsilon_{b_\circ}^*})
+(x_{\boldsymbol\varepsilon_{b_\circ}^*}-x_{\boldsymbol\varepsilon_{a_\circ}^*})
-(x_{\boldsymbol\varepsilon_{a}^*+}-x_{\boldsymbol\varepsilon_{a_\circ}^*})\\
&> - 2\rho^{m+1} + 4 \rho^{m+1} - 2\rho^{m+1}\\
&=0.
\end{align*}
This shows $x_{\boldsymbol\varepsilon_a^*+}<x_{\boldsymbol\varepsilon_b^*-}$ whenever $n\ge m+1$, and the proof is complete.
\end{proof}

Based on Lemma \ref{lem:non-collision-1}, we can now prove: 

\begin{lem}[A separation property]\label{lem:non-collision-2}
Suppose $n\ge 1$ and $\boldsymbol\varepsilon_a^*,\boldsymbol\varepsilon_b^*\in\mathscr D_{n}^*$ satisfy 
$$\boldsymbol\varepsilon_a^*<\boldsymbol\varepsilon_b^*.$$
Then for any $k\ge 0$,
$$\boldsymbol\varepsilon_{a'}^*\in\mathscr T^*(\boldsymbol\varepsilon_a^*;k),\ \boldsymbol\varepsilon_{b'}^*\in\mathscr T^*(\boldsymbol\varepsilon_b^*;k)
\ \Rightarrow\ x_{\boldsymbol\varepsilon_{a'}^*+}\le x_{\boldsymbol\varepsilon_{b'}^*-}$$
and strict inequality holds when $k\ge m$. 
\end{lem}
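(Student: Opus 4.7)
My plan is to single out the first position $j_0 \in \{1,\ldots,n\}$ at which $\boldsymbol\varepsilon_a^*$ and $\boldsymbol\varepsilon_b^*$ disagree; since $\boldsymbol\varepsilon_a^*<\boldsymbol\varepsilon_b^*$, we have $(\boldsymbol\varepsilon_a^*)_{j_0}=-1$ and $(\boldsymbol\varepsilon_b^*)_{j_0}=+1$, and the common prefix $\sigma$ of length $j_0-1$ is shared by the descendants $\boldsymbol\varepsilon_{a'}^*,\boldsymbol\varepsilon_{b'}^*\in \mathscr D_L^*$ (where $L=n+k$), which still first differ at position $j_0$ with the same signs. The common prefix contributes equally to $x_{\boldsymbol\varepsilon_{a'}^*+}$ and $x_{\boldsymbol\varepsilon_{b'}^*-}$ and cancels in the difference, so I will analyze the resulting tail difference via a case split governed by the comparison of $L$ with $j_0+m$.

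In the short-tail regime $L\le j_0+m-1$ I would use the direct estimate
$$
x_{\boldsymbol\varepsilon_{b'}^*-}-x_{\boldsymbol\varepsilon_{a'}^*+}\ \ge\ 2\rho^{j_0}-2\sum_{j=j_0+1}^{L+1}\rho^j
\ =\ 2\rho^{j_0-1}\Bigl(\rho-\sum_{i=2}^{L-j_0+2}\rho^i\Bigr),
$$
obtained by taking the extremal values of the middle digits, and then apply the shifted form of Lemma \ref{lem:1st-collision} (at index $L-j_0+2\le m$) or the boundary identity $\rho=\rho^2+\cdots+\rho^{m+1}$ (at index $m+1$) to see that the right-hand side is nonnegative. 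In the long-tail regime $L\ge j_0+m$ I would mimic the ancestor argument from the proof of Lemma \ref{lem:non-collision-1}: choose ancestors $\boldsymbol\varepsilon_{a_\circ}^*\in\mathscr T^*(\sigma(-);m)$ and $\boldsymbol\varepsilon_{b_\circ}^*\in\mathscr T^*(\sigma(+);m)$ of $\boldsymbol\varepsilon_{a'}^*+$ and $\boldsymbol\varepsilon_{b'}^*-$ at absolute level $j_0+m$; use Lemma \ref{lem:tail-gap} to bound each of $|x_{\boldsymbol\varepsilon_{a'}^*+}-x_{\boldsymbol\varepsilon_{a_\circ}^*}|$ and $|x_{\boldsymbol\varepsilon_{b'}^*-}-x_{\boldsymbol\varepsilon_{b_\circ}^*}|$ by $2\rho^{j_0+m}$; and show that the ``shifted bad pair'' $\sigma(-,+,\ldots,+)$ and $\sigma(+,-,\ldots,-)$ is pruned from $\mathscr D_{j_0+m}^*$, which would force a gap $x_{\boldsymbol\varepsilon_{b_\circ}^*}-x_{\boldsymbol\varepsilon_{a_\circ}^*}\ge 4\rho^{j_0+m}$ and yield strict inequality upon combining the three bounds. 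Observe finally that $k\ge m$ together with $n\ge j_0$ forces $L-j_0\ge k\ge m$, placing us entirely in the long-tail regime.

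The main obstacle I anticipate is the shifted analogue of Lemma \ref{lem:diamond} invoked above, namely that the pair $\sigma(-,+,\ldots,+)$ and $\sigma(+,-,\ldots,-)$ is absent from $\mathscr D_{j_0+m}^*$. The identity $\rho=\rho^2+\cdots+\rho^{m+1}$ implicit in \eqref{evenacci} gives $x_{\sigma(-,+,\ldots,+)}=x_\sigma=x_{\sigma(+,-,\ldots,-)}$, and the two associated sign products are opposite because $m$ is even. The delicate point is ruling out any third length-$(j_0+m)$ sequence whose $x$-value equals $x_\sigma$ and whose presence might prevent the net cancellation of the Dirac masses at $x_\sigma$ in $\nu_\beta^{(j_0+m)}$. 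I plan to address this by showing that any such sequence must itself start with the prefix $\sigma$ --- using Lemma \ref{lem:top-pyramid}(i) on the first $j_0-1$ coordinates to preclude other prefixes --- and then by invoking Lemma \ref{lem:top-pyramid}(ii) on its length-$(m+1)$ tail to force the tail into one of the two bad patterns, so that the coefficient of $\delta_{x_\sigma}$ reduces to the sum of exactly two opposite signs and vanishes.
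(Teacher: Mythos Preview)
Your short-tail estimate is fine, and the overall architecture (split at the first disagreement $j_0$, short/long tail regimes, ancestor argument \`a la Lemma~\ref{lem:non-collision-1}) is sound. The gap is exactly where you anticipated it: the claim that every $\boldsymbol\varepsilon\in\mathscr D_{j_0+m}$ with $x_{\boldsymbol\varepsilon}=x_\sigma$ must begin with the prefix $\sigma$ is false, and Lemma~\ref{lem:top-pyramid}(i) cannot establish it. That lemma compares full sequences of length at most $m$, not length-$(j_0-1)$ prefixes of length-$(j_0+m)$ sequences; the remaining $m+1$ coordinates can compensate for a different prefix. Concretely, take $m=2$ (the golden ratio) and $\sigma=(+,-)$, so $j_0=3$ and $j_0+m=5$. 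One checks directly (using $\rho^4+\rho^5=\rho^3$ twice) that
\[
x_{(-,+,+,+,+)}=-\rho+\rho^2+\rho^3+\rho^4+\rho^5=\rho-\rho^2=x_\sigma,
\]
yet $(-,+,+,+,+)$ does not begin with $\sigma$. Together with the two ``expected'' sequences $\sigma(-,+,+)=(+,-,-,+,+)$ and $\sigma(+,-,-)=(+,-,+,-,-)$, the sign products are $+1,-1,-1$, which sum to $-1\neq 0$. Hence $x_\sigma\in A_5$, and the bad pair is \emph{not} pruned by the mechanism you propose.

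What rescues the lemma in this example is that its hypothesis is violated: $\sigma(-)=(+,-,-)\notin\mathscr D_3^*$ by Lemma~\ref{lem:diamond}, so no pair $\boldsymbol\varepsilon_a^*,\boldsymbol\varepsilon_b^*\in\mathscr D_n^*$ can branch at this particular $\sigma$. Your argument never invokes the membership $\sigma(-),\sigma(+)\in\mathscr D_{j_0}^*$ (which follows from $\boldsymbol\varepsilon_a^*,\boldsymbol\varepsilon_b^*\in\mathscr D_n^*$), and that is precisely the information needed to exclude interference from foreign subtrees. The paper supplies it by induction on $n$: the inductive hypothesis, applied at the nearest common ancestor $\boldsymbol\varepsilon_\circ^*=\sigma$, guarantees that the two children $\boldsymbol\varepsilon_\circ^*\pm$ do not collide under $x_{\boldsymbol\cdot}$ with children of any other node in $\mathscr D_{j_0-1}^*$. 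This isolates the subtree $\mathscr T^*(\sigma)$ and forces its first pruning to follow the pattern of Lemma~\ref{lem:diamond}, after which the Lemma~\ref{lem:non-collision-1} estimate goes through verbatim. Without that inductive isolation, the pruning of the shifted bad pair cannot be deduced from Lemmas~\ref{lem:top-pyramid} and~\ref{lem:tail-gap} alone.
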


\begin{proof}
The proof is by induction on $n$. The case $n=1$ follows directly from Lemma \ref{lem:non-collision-1}. Suppose the statement holds for $1,\cdots,n-1$. We now prove that it also holds for $n$.

It suffices to consider the case where
$$\boldsymbol\varepsilon_a^*=\boldsymbol\varepsilon_\circ^*-,\quad 
\boldsymbol\varepsilon_b^*=\boldsymbol\varepsilon_\circ^*+$$
for some $\boldsymbol\varepsilon_\circ^*\in \mathscr D_{n-1}^*$. This is because otherwise we can consider the nearest common ancestor (say $\boldsymbol\varepsilon_\circ^*$) of $\boldsymbol\varepsilon_a^*$ and $\boldsymbol\varepsilon_b^*$, and apply the induction hypothesis to $\boldsymbol\varepsilon_\circ^*-$ and $\boldsymbol\varepsilon_\circ^*+$. 

By the induction hypothesis, for any $\boldsymbol\varepsilon_1^*,\boldsymbol\varepsilon_2^*\in\mathscr D_{n-1}^*$, 
$$\boldsymbol\varepsilon_1^*<\boldsymbol\varepsilon_2^*\Rightarrow x_{\boldsymbol\varepsilon_{1}^*+}\le x_{\boldsymbol\varepsilon_{2}^*-}$$
and equality holds only if
$$\boldsymbol\varepsilon_{1}^*+,\ \boldsymbol\varepsilon_{2}^*-\notin \mathscr D_{n}^*.$$
Since $\boldsymbol\varepsilon_a^*=\boldsymbol\varepsilon_\circ^*-,\ \boldsymbol\varepsilon_b^*=\boldsymbol\varepsilon_\circ^*+\in\mathscr D_{n}^*$, after mapped by $x_\cdot$, $\boldsymbol\varepsilon_a^*$ and $\boldsymbol\varepsilon_b^*$ do not overlap with the (either left or right) children of the nodes in $\mathscr D_{n-1}^*\backslash \{\boldsymbol\varepsilon_\circ^*\}$. Consequently, we have
$$\boldsymbol\varepsilon_\circ^*-+\cdots+\,\in \mathscr T^*(\boldsymbol\varepsilon_\circ^*;m),$$
$$\boldsymbol\varepsilon_\circ^*+-\cdots-\,\in \mathscr T^*(\boldsymbol\varepsilon_\circ^*;m),$$
and by cancellation
$$\boldsymbol\varepsilon_\circ^*-+\cdots+\,\notin \mathscr T^*(\boldsymbol\varepsilon_\circ^*;m+1),$$
$$\boldsymbol\varepsilon_\circ^*+-\cdots-\,\notin \mathscr T^*(\boldsymbol\varepsilon_\circ^*;m+1).$$
Therefore, by the same argument as in the proof of Lemma \ref{lem:non-collision-1}, we have, for any $k\ge m$, 
$$\boldsymbol\varepsilon_{a'}^*\in\mathscr T^*(\boldsymbol\varepsilon_\circ^*-;k),\ \boldsymbol\varepsilon_{b'}^*\in\mathscr T^*(\boldsymbol\varepsilon_\circ^*+;k)
\ \Rightarrow\ x_{\boldsymbol\varepsilon_{a'}^*+}<x_{\boldsymbol\varepsilon_{b'}^*-}.$$	
The case $k\le m-1$ is obvious. This completes the proof.
\end{proof}

Lemma \ref{lem:order-preserve} now follows immediately.

\begin{proof}[Proof of Lemma \ref{lem:order-preserve}]
By taking $k=0$ in Lemma \ref{lem:non-collision-2}, we see that
$$\boldsymbol\varepsilon^*_a<\boldsymbol\varepsilon^*_b\ \Rightarrow\ 
x_{\boldsymbol\varepsilon^*_a}<x_{\boldsymbol\varepsilon^*_a+}\le x_{\boldsymbol\varepsilon^*_b-}<x_{\boldsymbol\varepsilon^*_b}.$$ 
In particular, the map
$$x_{\boldsymbol\cdot}: 
\mathscr D_{n}^*\rightarrow A_n,\ 
\boldsymbol\varepsilon^*\mapsto x_{\boldsymbol\varepsilon^*}$$
is order-preserving (thus injective). Surjectivity of this map follows easily from \eqref{inductive} and induction on $n$. 
\end{proof}	

As a corollary of Lemma \ref{lem:order-preserve}, the following identities follow easily by induction.

\begin{lem}\label{lem:cor}
For any $n\ge 1$, we have
\begin{equation}\label{eq:expansion*}
\nu_\beta^{(n)}
=\frac{1}{2^n}\sum_{\boldsymbol\varepsilon\in\mathscr D_n^*} \varepsilon_1\cdots\varepsilon_n \,\delta_{x_{\boldsymbol\varepsilon}}.
\end{equation}
In particular,
\begin{equation}\label{eq:variation}
\|\nu_\beta^{(n)}\|=\frac{|\mathscr D_n^*|}{2^n},\ n\ge 0.
\end{equation}
\end{lem}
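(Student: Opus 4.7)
The plan is to prove \eqref{eq:expansion*} by induction on $n$, after which \eqref{eq:variation} follows at once. The base case $n=1$ is immediate: $\mathscr D_1^*=\mathscr D_1=\{-1,+1\}$ and $x_{-1}=-\rho\ne\rho=x_{+1}$, so the claimed identity matches \eqref{inductive} applied to $\nu_\beta^{(0)}=\delta_0$.

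For the inductive step, I would substitute the level-$(n-1)$ hypothesis into \eqref{inductive} and expand, obtaining
$$\nu_\beta^{(n)}=\frac{1}{2^n}\sum_{\boldsymbol\varepsilon\in\mathscr D_{n-1}^*}\sum_{\varepsilon_n\in\{\pm 1\}}\varepsilon_1\cdots\varepsilon_n\,\delta_{x_{\boldsymbol\varepsilon'}},$$
where $\boldsymbol\varepsilon'=(\varepsilon_1,\ldots,\varepsilon_n)$ ranges over the set $\mathscr C$ of all children of elements of $\mathscr D_{n-1}^*$. For each real $x$, set $S(x)=\{\boldsymbol\varepsilon'\in\mathscr C:x_{\boldsymbol\varepsilon'}=x\}$ and $\sigma(x)=\sum_{\boldsymbol\varepsilon'\in S(x)}\varepsilon_1\cdots\varepsilon_n$, so that $\nu_\beta^{(n)}=\frac{1}{2^n}\sum_x\sigma(x)\,\delta_x$ and $x\in A_n$ if and only if $\sigma(x)\ne 0$.

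The key step is the following dichotomy. By the definition of $\mathscr D_n^*$, whenever $x\in A_n$ every element of $S(x)$ lies in $\mathscr D_n^*$; but Lemma \ref{lem:order-preserve} asserts injectivity of $x_\cdot$ on $\mathscr D_n^*$, so $|S(x)\cap\mathscr D_n^*|\le 1$. Consequently, if $|S(x)|\ge 2$ then $x\notin A_n$ and hence $\sigma(x)=0$ (that is, multi-child collisions cancel completely), while if $|S(x)|=1$ the unique member of $S(x)$ belongs to $\mathscr D_n^*$ and contributes $\varepsilon_1\cdots\varepsilon_n\delta_x$. Reindexing the outer sum by surviving children $\boldsymbol\varepsilon'\in\mathscr D_n^*$ then yields \eqref{eq:expansion*}.

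The identity \eqref{eq:variation} is then immediate from \eqref{eq:expansion*}: the supports $\{x_{\boldsymbol\varepsilon^*}:\boldsymbol\varepsilon^*\in\mathscr D_n^*\}$ are distinct by Lemma \ref{lem:order-preserve}, and each coefficient $\varepsilon_1\cdots\varepsilon_n$ has absolute value $1$, so $\|\nu_\beta^{(n)}\|=|\mathscr D_n^*|/2^n$. I expect the only nontrivial point to be the assertion that colliding children cancel exactly rather than merely reducing in multiplicity, and this is precisely where the injectivity supplied by Lemma \ref{lem:order-preserve} is needed; once that is in hand the rest is bookkeeping.
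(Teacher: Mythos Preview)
Your argument is correct and is precisely the induction the paper has in mind: the paper merely states that the identities ``follow easily by induction'' as a corollary of Lemma~\ref{lem:order-preserve}, and you have spelled out exactly that induction, with the key use of injectivity of $x_{\boldsymbol\cdot}$ on $\mathscr D_n^*$ to force collisions in $\mathscr C$ to cancel completely. One tiny clarification worth making explicit is that when $|S(x)|=1$ you have $\sigma(x)=\pm1\ne0$, hence $x\in A_n$, which is what places the unique member of $S(x)$ into $\mathscr D_n^*$; you use this implicitly but it is the bridge in that direction.
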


\section{Proof of Lemma \ref{lem:leafless}}\label{sec:leafless}
To prove Lemma \ref{lem:leafless}, we first show:
\begin{lem}[Diamond pattern]\label{lem:one-child}
Suppose $n\ge m$ and $\boldsymbol\varepsilon^*\in\mathscr D_{n}^*$. Then 
$$\boldsymbol\varepsilon^*-\,\notin\mathscr D_{n+1}^*$$
if and only if
$$\boldsymbol\varepsilon^*
=\boldsymbol\varepsilon_\circ^*+-\cdots-$$
for some $\boldsymbol\varepsilon_\circ^*\in \mathscr D_{n-m}^*$ with  $\boldsymbol\varepsilon_\circ^*\pm\,\in \mathscr D_{n-m+1}^*$; similarly, 
$$\boldsymbol\varepsilon^*+\,\notin\mathscr D_{n+1}^*$$
if and only if
$$\boldsymbol\varepsilon^*
=\boldsymbol\varepsilon_\circ^*-+\cdots+$$
for some $\boldsymbol\varepsilon_\circ^*\in \mathscr D_{n-m}^*$ with $\boldsymbol\varepsilon_\circ^*\pm\,\in \mathscr D_{n-m+1}^*$. 
\end{lem}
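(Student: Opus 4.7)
The proof will analyze when the mass at $x_{\boldsymbol\varepsilon^* -}$ in $\nu_\beta^{(n+1)}$ vanishes; the $+$-case is completely parallel, and in fact both halves should be proved simultaneously by induction on $n\ge m$. The base case $n=m$ follows directly from Lemma \ref{lem:diamond}: the unique problematic $-$-child is $(+,-,\ldots,-,-)$, coming from $\boldsymbol\varepsilon^*=(+,-,\ldots,-)$ with empty prefix $\boldsymbol\varepsilon_\circ^*\in\mathscr D_0^*$, and $\boldsymbol\varepsilon_\circ^*\pm\in\mathscr D_1^*=\mathscr D_1$ trivially.

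For the inductive step, I would combine $\nu_\beta^{(n+1)}=\nu_\beta^{(n)}*\tfrac{1}{2}(\delta_{\beta^{-(n+1)}}-\delta_{-\beta^{-(n+1)}})$ with the $\mathscr D_n^*$-expansion \eqref{eq:expansion*} of Lemma \ref{lem:cor}. Then the coefficient of $\delta_{x_{\boldsymbol\varepsilon^* -}}$ in $\nu_\beta^{(n+1)}$ receives the contribution $-\varepsilon_1^*\cdots\varepsilon_n^*$ from $\boldsymbol\varepsilon^*$ itself, and a contribution $\varepsilon'_1\cdots\varepsilon'_n$ from each $\boldsymbol\varepsilon'\in\mathscr D_n^*$ satisfying $x_{\boldsymbol\varepsilon'+}=x_{\boldsymbol\varepsilon^*-}$. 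By Lemma \ref{lem:order-preserve}, $\boldsymbol\varepsilon^*-\in\mathscr D_{n+1}^*$ if and only if this total coefficient is nonzero.

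The next step is to locate such collisions. Applying Lemma \ref{lem:non-collision-2} to the nearest common ancestor $\boldsymbol\varepsilon_\circ^*$ of $\boldsymbol\varepsilon'$ and $\boldsymbol\varepsilon^*$ in $\mathscr T^*$, the equality $x_{\boldsymbol\varepsilon'+}=x_{\boldsymbol\varepsilon^*-}$ is only possible when $\boldsymbol\varepsilon_\circ^*\in\mathscr D_{n-m}^*$ and the suffixes take the extremal form dictated by the equality case of Lemma \ref{lem:top-pyramid}(ii):
$$\boldsymbol\varepsilon'=\boldsymbol\varepsilon_\circ^*-+\cdots+,\qquad \boldsymbol\varepsilon^*=\boldsymbol\varepsilon_\circ^*+-\cdots-.$$
A short sign calculation, using that $m$ is even, shows that the two contributions have opposite signs, so the mass at $x_{\boldsymbol\varepsilon^*-}$ vanishes precisely when $\boldsymbol\varepsilon^*$ has this form and the partner $\boldsymbol\varepsilon'=\boldsymbol\varepsilon_\circ^*-+\cdots+$ lies in $\mathscr D_n^*$.

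It remains to rephrase the partner condition as the stated $\boldsymbol\varepsilon_\circ^*\pm\in\mathscr D_{n-m+1}^*$. The $+$-sibling condition is automatic from $\boldsymbol\varepsilon^*\in\mathscr D_n^*$, and the forward direction ``$\boldsymbol\varepsilon_\circ^*-+\cdots+\in\mathscr D_n^*\Rightarrow\boldsymbol\varepsilon_\circ^*-\in\mathscr D_{n-m+1}^*$'' is immediate since ancestors of $\mathscr T^*$-nodes live in $\mathscr T^*$. For the converse, I would append pluses one at a time, invoking the companion half of the lemma (the $+$-case) at strictly smaller levels: at each intermediate node $\boldsymbol\xi^*=\boldsymbol\varepsilon_\circ^*-+\cdots+$ with $0\le j\le m-2$ trailing pluses, the $+$-characterization would require $\boldsymbol\xi^*$ to end with $m-1$ pluses to cause an obstruction, which fails here. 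The main obstacle is setting up this simultaneous induction on $n$ so that every invocation happens at a strictly smaller level, avoiding any circularity; this works out because the relevant substeps sit at levels $n-m+1+j\le n-1$.
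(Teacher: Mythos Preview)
Your proposal is correct and follows essentially the same route as the paper: localize any collision of $\boldsymbol\varepsilon^*-$ via Lemma~\ref{lem:non-collision-2} to the subtree rooted at the $m$-th ancestor, then use the equality case of Lemma~\ref{lem:top-pyramid}(ii) to force the diamond pattern. The only organizational difference is that for the ``if'' direction and the survival of the partner $\boldsymbol\varepsilon_\circ^*-+\cdots+$ in $\mathscr D_n^*$, the paper simply points back to the proof of Lemma~\ref{lem:non-collision-2} (where the same step-by-step survival argument already occurs inside its induction), whereas you spell it out as a simultaneous induction on $n$ invoking the companion $+$-case at strictly smaller levels; both are the same argument, yours packaged more self-containedly.
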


\begin{proof}
We prove only the first case where $\boldsymbol\varepsilon^*-\,\notin\mathscr D_{n+1}^*$. The other case follows by symmetry. Also, the `if' part is easy by the proof of Lemma \ref{lem:non-collision-2}. So we only need to prove the `only if' part of the statement. 

Suppose $\boldsymbol\varepsilon_\circ^*\,\in \mathscr D_{n-m}^*$ is the $m$-th ancestor of $\boldsymbol\varepsilon^*$, that is,
$$\boldsymbol\varepsilon^*\in\mathscr T^*(\boldsymbol\varepsilon_\circ^*;m).$$ 
Then, by Lemma \ref{lem:non-collision-2}, $\boldsymbol\varepsilon^*-$ must be canceled within $\mathscr T^*(\boldsymbol\varepsilon_\circ^*)$ -- more precisely, there must exist $\boldsymbol\varepsilon_\star^*\in \mathscr T^*(\boldsymbol\varepsilon_\circ^*;m)$
such that
$$x_{\boldsymbol\varepsilon_\star^*+}=x_{\boldsymbol\varepsilon^*-}.$$
However, by Lemma \ref{lem:diamond}, this is impossible unless 
$$\boldsymbol\varepsilon^*=\boldsymbol\varepsilon_\circ^*+-\cdots-,\quad 
\boldsymbol\varepsilon_\star^*=\boldsymbol\varepsilon_\circ^*-+\cdots+.$$
It follows also that 
$$\boldsymbol\varepsilon_\circ^*\pm\,\in \mathscr D_{n-m+1}^*.$$ This completes the proof.
\end{proof}

Lemma \ref{lem:leafless} now follows momentarily.

\begin{proof}[Proof of Lemma \ref{lem:leafless}]
Assume to the contrary that there exists $\boldsymbol\varepsilon^*\in \mathscr D_{n}^*$ with
$$\boldsymbol\varepsilon^*-\,\notin \mathscr D_{n+1}^*,\quad 
\boldsymbol\varepsilon^*+\,\notin \mathscr D_{n+1}^*.$$
Then, by Lemma \ref{lem:one-child}, there must exist $\boldsymbol\varepsilon_a^*, \boldsymbol\varepsilon_b^*\in \mathscr D_{n-1}^*$ such that
$$\boldsymbol\varepsilon_a^*+=\boldsymbol\varepsilon^*=\boldsymbol\varepsilon_b^*-.$$
However, this implies that $\boldsymbol\varepsilon^*\notin \mathscr D_{n}^*$, a contradiction.
\end{proof}

\section{Proof of Theorem \ref{thm}}\label{sec:proof}
We are now ready to prove Theorem \ref{thm}. By Lemma \ref{lem:cor},
$$a_n=2^n\|\nu_\beta^{(n)}\|=|\mathscr D_{n}^*|,\ n\ge 0.$$
In particular, Lemma \ref{lem:diamond} gives
\begin{equation}\label{eq:ini-cond}
a_n=2^n,\ n\le m.
\end{equation}
For $n\ge m$, we have, by the proof of Lemma \ref{lem:non-collision-2},
$$a_{n+1}=2a_{n} - 2 b_n$$
where $b_n$ denotes the number of pairs $(\boldsymbol\varepsilon^*_a,\boldsymbol\varepsilon^*_b)\in \mathscr D_{n}^*\times\mathscr D_{n}^*$ such that 
$$x_{\boldsymbol\varepsilon^*_a+}=x_{\boldsymbol\varepsilon^*_b-}.$$
By Lemma \ref{lem:one-child}, such pairs are in one-to-one correspondence with the nodes $\boldsymbol\varepsilon_\circ^*\in\mathscr D_{n-m}^*$ satisfying 
\begin{equation}\label{eq:2-children}
\boldsymbol\varepsilon_\circ^*\pm\,\in \mathscr D_{n-m+1}^*.
\end{equation}
Note that each of these nodes contributes an increment of one from $a_{n-m}$ to $a_{n-m+1}$. On the other hand, by Lemma \ref{lem:leafless}, all other nodes in $\mathscr D_{n-m}^*$ have exactly one child in $\mathscr D_{n-m+1}^*$, therefore contribute no increment from $a_{n-m}$ to $a_{n-m+1}$. It follows that the number of nodes satisfying \eqref{eq:2-children} is given by 
$a_{n-m+1}-a_{n-m},$
that is,
$$b_n=a_{n-m+1}-a_{n-m}.$$
Combining, we obtain the desired recurrence relation
$$a_{n+1}=2a_{n} - 2 a_{n-m+1} + 2 a_{n-m},\ n\ge m.$$
This completes the proof of \eqref{recurrence}. 

It remains to prove the asymptotic \eqref{asymptotic}. For that we will need:

\begin{lem}\label{lem:roots-distribution}
Let $m\ge 2$ be an even integer. Then the equation
\begin{equation}\label{eq:characteristic-polynomial-z}
z^{m+1}=2 z^m-2z+2
\end{equation}
has exactly one real root; moreover, the real root lies in the interval $(1,2)$ and has the largest absolute value among all the roots of \eqref{eq:characteristic-polynomial-z}.
\end{lem}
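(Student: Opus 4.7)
My plan is to split the lemma into an analytic check that $p(z) := z^{m+1} - 2z^m + 2z - 2$ has exactly one real root (necessarily in $(1,2)$), and then a Rouché argument showing that root dominates. The sign part is routine: the rewriting $p(z) = z^m(z-2) + 2(z-1)$ gives $p<0$ on $(-\infty,1]$ (the parity of $m$ is used only on $(-\infty,0]$) and $p>0$ on $[2,\infty)$, so all real roots lie in $(1,2)$ and the intermediate value theorem produces at least one since $p(1)=-1$ and $p(2)=2$. For uniqueness I plan to study $p''(z) = m z^{m-2}[(m+1)z - 2(m-1)]$: the function $p'$ has a unique minimum at $z_\ast := 2(m-1)/(m+1)$, of value $2(1 - z_\ast^{m-1})$. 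For $m=2$ this minimum is positive and $p$ is monotone on $(0,\infty)$, so uniqueness is immediate. For $m\ge 4$ (even) the minimum is negative, so $p'$ has two positive zeros $z_1<z_2$; since $p'(1) = 3-m < 0$, one has $z_1 < 1$. Substituting $p'(z_1)=0$ into $p(z_1)$ gives
$$p(z_1) = -\frac{2\,[\,m z_1^2 - (3m-1)z_1 + 2m\,]}{2m - (m+1)z_1},$$
where the denominator is positive (since $z_1<z_\ast<2m/(m+1)$) and the bracketed quadratic equals $1$ at $w=1$ and is strictly decreasing on $[0,1]$ (its critical point $(3m-1)/(2m)$ exceeds $1$), so it is strictly positive on $[0,1]\ni z_1$. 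Thus $p(z_1)<0$ — the local max of $p$ is negative — and $p$ has a unique positive real root $\lambda$, lying in $(1,2)$.

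For dominance I plan to apply Rouché with the splitting $p = q + r$, where $q(z) = z^m(z-2)$ and $r(z)=2(z-1)$. On $|z| = R$ with $x=\text{Re}(z)$,
$$\frac{|z-2|^2}{|z-1|^2} = \frac{R^2 - 4x + 4}{R^2 - 2x + 1},$$
whose $x$-derivative has sign $2-R^2$. Hence for $R>\sqrt 2$ the ratio is minimized at $z=R$, with value $(2-R)/(R-1)$, and the Rouché condition $|r|<|q|$ on $|z|=R$ reduces to $R^m(2-R)>2(R-1)$, i.e., $p(R)<0$. This holds for every $R\in(\sqrt 2,\lambda)$, an interval that is nonempty because a direct calculation gives $p(\sqrt 2) = (\sqrt 2 - 1)(2 - 2^{(m+1)/2})$, strictly negative for $m\ge 2$. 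Rouché then shows $p$ and $q$ have the same number of zeros in $|z|<R$, namely $m$ (from the $m$-fold zero of $q$ at $0$; the zero of $q$ at $2$ is outside since $R<2$). Letting $R\uparrow \lambda$ places $m$ roots of $p$ in the open disk $|z|<\lambda$, so the remaining root lies on $|z|\ge \lambda$ and must be $\lambda$ itself, with every other root having strictly smaller modulus.

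The hardest step I expect is the inequality $p(z_1)<0$ for even $m\ge 4$: the algebraic identity is clean, but one really does need the happy fact that $mw^2-(3m-1)w+2m>0$ on $[0,1]$ together with the a priori localization $z_1<1$ coming from $p'(1)<0$. Everything else (the sign facts, the Rouché splitting, and the one-variable monotonicity of $|z-2|^2/|z-1|^2$ on the circle) is standard once this anchor is in place.
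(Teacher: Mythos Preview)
Your proof is correct and in fact sharper than the paper's in one respect. Both arguments establish the sign pattern via $p(z)=z^m(z-2)+2(z-1)$, but the two diverge afterwards.

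For uniqueness of the real root, the paper argues geometrically: since $p'(1)<0$ and $p''(1)<0$ for $m\ge 4$, and $p''$ changes sign only once (at $z_\ast=2(m-1)/(m+1)$), the function $p$ is decreasing throughout the concave part of $(1,2)$ and can cross zero only once in the convex part. Your argument is more algebraic: you extract the closed form $p(z_1)=-2[mz_1^2-(3m-1)z_1+2m]/(2m-(m+1)z_1)$ at the local maximum and check the sign of each factor. Both work; yours is slightly longer but gives an explicit handle on $p(z_1)$.

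For dominance, the paper splits $p=g+h$ with $g(z)=-2z^m$, bounds $|h|$ on $|z|=r$ by the triangle inequality, and then fixes $r=3/2$; this forces separate verification for $m=2$ and $m=4$. Your splitting $q(z)=z^m(z-2)$, $r(z)=2(z-1)$ is genuinely different and cleaner: the minimum of $|z-2|/|z-1|$ on $|z|=R>\sqrt 2$ occurs at the real point $z=R$, so the Rouch\'e hypothesis becomes \emph{exactly} $p(R)<0$. Since $p(\sqrt 2)=(\sqrt2-1)(2-2^{(m+1)/2})<0$ for every even $m\ge 2$, the interval $(\sqrt2,\lambda)$ is nonempty and Rouch\'e applies uniformly, with no small-$m$ cases to check. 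This also yields the optimal conclusion directly: for any $R\in(\sqrt2,\lambda)$ there are exactly $m$ roots in $|z|<R$, hence all non-real roots satisfy $|z|<\lambda$ strictly.
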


\begin{proof}
Let
$$f(z)=z^{m+1}-2z^m+2z-2.$$
By writing
$$f(x)=x^m(x-2)+2(x-1),$$
it is easy to see that $f(x)<0$ when $x\le 1$ and $f(x)>0$ when $x\ge 2$. In particular, $f$ has at least one zero in $(1,2)$. Suppose $m\ge 4$. Then, since
\begin{align*}
f'(x)&=(m+1)x^m-2mx^{m-1}+2,\\
f''(x)&=(m+1)m x^{m-1}-2m(m-1)x^{m-2}
\end{align*}
are both negative at $x=1$, to show that $f$ has only one zero in $(1,2)$ it suffices to show that $f$ has exactly one inflection point in $(1,2)$. However, this is clear as one can write
$$f''(x)=m x^{m-2}\big((m+1)x-2(m-1)\big),$$
which changes sign only at $x=\frac{2(m-1)}{m+1}.$ 
In particular, the zero must lie in the interval
$$\left(\frac{2(m-1)}{m+1},2 \right ).$$
The case $m=2$ is simpler, as $f'$ would be positive on $(1,2)$ in this case. 

Let $\lambda_m$ denote the real root of $f$, and let $z_1,\cdots,z_m$ be the complex roots of $f$. It remains to show
$$|z_j|<\lambda_m,\ j=1,\cdots,m.$$
In fact, we will show
\begin{equation}\label{eq:spectral-radius}
|z_j|<\frac{3}{2},\ j=1,\cdots,m.
\end{equation}
This suffices because $\lambda_m$ is increasing in $m$ and $\lambda_2=1.543\cdots$. To show \eqref{eq:spectral-radius}, we write
$$f(z)=g(z)+h(z)$$	
with
$$g(z)=-2z^m,\quad h(z)=z^{m+1}+2z-2.$$
Let $r\in (1,2)$. Then on the circle $|z|=r$, we have
$$|g(z)|= 2 r^{m},\quad |h(z)|\le r^{m+1}+2r+2.$$
In particular,
$$|h(z)|<|g(z)|,\ |z|=r$$
holds provided
$$r^{m+1}+2r+2<2 r^{m},$$
or equivalently,
\begin{equation}\label{eq:spectral-radius-bound}
r^m>\frac{2r+2}{2-r}.
\end{equation}
Now fix $r=\frac{3}{2}$ and let $m\ge 6$. It is easy to see that \eqref{eq:spectral-radius-bound} is satisfied. By Rouch\'e's theorem, $f(z)$ and $g(z)$ have the same number ($m$) of zeros in the disk $|z|<r$. On the other hand, since $\lambda_m>\frac{3}{2}$, it follows that all the $m$ complex roots of $f$ are in the disk $|z|<r$, that is, \eqref{eq:spectral-radius} holds. 

By direct checking, \eqref{eq:spectral-radius} also holds when $m=2, 4$. This completes the proof of the lemma.
\end{proof}

Now denote by $\lambda$ the real root of \eqref{eq:characteristic-polynomial-z}. Consider the generating function
$$F(z)=\sum_{n=0}^\infty a_n z^n,\ |z|<{\lambda^{-1}}.$$
By \eqref{recurrence}, it is easy to find
$$F(z)=\frac{1+2z^m}{1-2z+2z^m-2z^{m+1}}.$$
Notice that $1+2z^m\neq 0$ when $z=\lambda^{-1}$. Combining this with Lemma \ref{lem:roots-distribution}, by \cite[Theorem~10.8]{Odlyzko1995}, it follows that there exists a constant $C>0$ such that
$$a_n\sim C\lambda^n,\ \text{as }n\rightarrow\infty.$$
This proves \eqref{asymptotic} and the proof of Theorem \ref{thm} is complete.

\section{Remarks}\label{sec:remarks}

\noindent\textit{Remark 1.} By \eqref{expansion}, in order for
$$\|\nu^{(n)}_\beta\|<1$$
to hold, there must exist $(\varepsilon_1',\cdots,\varepsilon_n'),(\varepsilon_1'',\cdots,\varepsilon_n'')\in\{\pm 1\}^n$ such that
\begin{equation}\label{eq:overlap-1}
\sum_{j=1}^n \varepsilon_j' \beta^{-j}
=\sum_{j=1}^n \varepsilon_j'' \beta^{-j}
\end{equation}
and such that
$$\varepsilon_1'\cdots\varepsilon_n'=-\varepsilon_1''\cdots\varepsilon_n''.$$
Rewriting \eqref{eq:overlap-1}, this means
\begin{equation}\label{eq:overlap-2}
2 \sum_{j=1}^n \eta_j \beta^{-j}=0
\end{equation}
holds for some $(\eta_1,\cdots,\eta_n)\in\{0,\pm 1\}^n$ satisfying
$$|\eta_1|+\cdots+|\eta_n|\text{ is odd},$$
or equivalently,
$$\eta_1+\cdots+\eta_n\text{ is odd}.$$
It is easy to see that the above reasoning is revertible. Thus, after multiplying \eqref{eq:overlap-2} by $\beta^n$, we obtain the following.

\begin{lem}\label{lem:zero-element}
$\|\nu^{(n)}_\beta\|<1$ holds if and only if there exists a polynomial
$$p(x)=\sum_{j=0}^{n-1} \eta_j x^j$$
with $(\eta_0,\cdots,\eta_{n-1})\in\{0,\pm 1\}^n$ such that $p(1)$ is odd and $p(\beta)=0$.
\end{lem}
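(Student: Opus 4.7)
The plan is to prove both directions of the equivalence by a direct translation between cancellation in the expansion \eqref{expansion} and the polynomial condition. From \eqref{expansion}, the total variation $\|\nu_\beta^{(n)}\|$ is the sum of absolute values of the coefficients after collapsing Dirac masses that share a common location. Since every summand in \eqref{expansion} has magnitude $2^{-n}$ and there are $2^n$ of them, $\|\nu_\beta^{(n)}\|<1$ is equivalent to the existence of at least one pair $\boldsymbol\varepsilon',\boldsymbol\varepsilon''\in\{\pm 1\}^n$ with $x_{\boldsymbol\varepsilon'}=x_{\boldsymbol\varepsilon''}$ and $\varepsilon_1'\cdots\varepsilon_n'=-\varepsilon_1''\cdots\varepsilon_n''$.

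The bridge to the polynomial formulation is the substitution $\eta_j:=(\varepsilon_j'-\varepsilon_j'')/2\in\{-1,0,+1\}$. Under this substitution the coincidence $x_{\boldsymbol\varepsilon'}=x_{\boldsymbol\varepsilon''}$ becomes $\sum_{j=1}^n \eta_j\beta^{-j}=0$, and after multiplying by $\beta^n$ and reindexing $k=n-j$ we obtain $p(\beta)=0$ for the polynomial $p(x):=\sum_{k=0}^{n-1}\eta_{n-k}x^k$ with coefficients in $\{0,\pm 1\}$. The sign-product condition requires the complementary observation: $\varepsilon_j'\neq\varepsilon_j''$ precisely when $\eta_j\neq 0$, so $\varepsilon_1'\cdots\varepsilon_n'=-\varepsilon_1''\cdots\varepsilon_n''$ translates into ``the number of indices $j$ with $\eta_j\neq 0$ is odd''. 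Since $|\eta_j|\equiv\eta_j\pmod 2$ whenever $\eta_j\in\{0,\pm 1\}$, this odd-cardinality statement is exactly the assertion that $p(1)=\sum_j \eta_{n-j}=\sum_j\eta_j$ is odd.

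For the converse direction, given such a polynomial I would invert the substitution: recover $\eta_j$ from the coefficients of $p$, then define $(\varepsilon_j',\varepsilon_j''):=(+1,-1),(-1,+1),(+1,+1)$ according to whether $\eta_j$ equals $+1,-1,0$. One checks directly that the resulting $\boldsymbol\varepsilon',\boldsymbol\varepsilon''$ lie in $\{\pm 1\}^n$, satisfy $x_{\boldsymbol\varepsilon'}=x_{\boldsymbol\varepsilon''}$ (by $p(\beta)=0$), and have opposite sign products (by the oddness of $p(1)$), forcing cancellation and hence $\|\nu_\beta^{(n)}\|<1$. The whole argument is bookkeeping; the only subtle point — and the one I would be most careful to record — is the parity translation, since the natural ``odd-cardinality'' condition on the $\eta_j$'s coincides with ``odd $p(1)$'' only because each $\eta_j\in\{0,\pm 1\}$ is congruent to $|\eta_j|$ modulo $2$.
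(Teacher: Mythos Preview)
Your proof is correct and follows essentially the same route as the paper: both arguments identify $\|\nu_\beta^{(n)}\|<1$ with the existence of a colliding pair of opposite sign, pass to the difference variables $\eta_j=(\varepsilon_j'-\varepsilon_j'')/2\in\{0,\pm1\}$, and clear denominators by multiplying by $\beta^n$. You spell out the converse construction and the parity translation $|\eta_j|\equiv\eta_j\pmod 2$ more explicitly than the paper (which simply declares the reasoning ``revertible''), but there is no substantive difference in method.
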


Now suppose $\beta>1$ satisfies \eqref{evenacci} for some odd integer $m\ge 3$. Denote its minimal polynomial by
$$m(x)=x^m-x^{m-1}-\cdots-x-1.$$
If there is an integer-coefficient polynomial
$$p(x)=\sum_{j=0}^{n-1} \eta_j x^j$$
such that $p(\beta)=0$, then, by the minimality of $m(x)$, there must exist $q(x)\in\mathbb Z[x]$, such that
$$p(x)=m(x)q(x).$$
In particular, we have
$$p(1)=m(1)q(1).$$
However, since $m(1)=-(m-1)$ is even, it follows that $p(1)$ must be even too. Thus, combining this with Lemma \ref{lem:zero-element}, we obtain the following.

\begin{prop}\label{prop:oddnacci:no-decay}
Suppose $\beta>1$ satisfies \eqref{evenacci} for an odd integer $m\ge 3$. Then $$\|\nu^{(n)}_\beta\|=1,\ n\ge 1.$$
\end{prop}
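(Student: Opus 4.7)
The plan is to reach a contradiction by combining Lemma \ref{lem:zero-element} with the fact that the multinacci polynomial
$$M(x) = x^m - x^{m-1} - \cdots - x - 1$$
is irreducible over $\mathbb{Q}$ (and hence is the minimal polynomial of $\beta$).

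First, I would observe that since every factor $\frac{1}{2}\delta_{\beta^{-j}} - \frac{1}{2}\delta_{-\beta^{-j}}$ has total variation $1$, submultiplicativity of total variation under convolution yields $\|\nu_\beta^{(n)}\| \le 1$ for every $n \ge 1$. So the proposition reduces to showing that $\|\nu_\beta^{(n)}\| < 1$ is impossible when $m$ is odd.

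Next, assuming toward a contradiction that $\|\nu_\beta^{(n)}\| < 1$ for some $n$, I would invoke Lemma \ref{lem:zero-element} to obtain a polynomial $p(x) = \sum_{j=0}^{n-1} \eta_j x^j$ with $\eta_j \in \{0, \pm 1\}$, $p(\beta) = 0$, and $p(1)$ odd. Irreducibility of $M$ then provides $q \in \mathbb{Z}[x]$ with $p(x) = M(x) q(x)$, and evaluating at $x = 1$ gives
$$p(1) = M(1) q(1) = (1 - m) q(1).$$
Since $m$ is odd, $1 - m$ is even, forcing $p(1)$ to be even -- the desired contradiction. Hence $\|\nu_\beta^{(n)}\| = 1$ for every $n \ge 1$.

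The only nontrivial ingredient is the irreducibility of $M(x)$ over $\mathbb{Q}$, which is a classical property of multinacci polynomials (it follows from $\beta$ being a Pisot number together with Kronecker's theorem on algebraic integers all of whose conjugates lie in the closed unit disk). I would either cite this fact or verify it separately; otherwise the argument is a one-line parity check, so I do not anticipate any serious obstacle.
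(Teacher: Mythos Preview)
Your argument is correct and essentially identical to the paper's: both invoke Lemma~\ref{lem:zero-element}, use that $M(x)=x^m-x^{m-1}-\cdots-1$ is the minimal polynomial of $\beta$ to factor $p(x)=M(x)q(x)$ in $\mathbb{Z}[x]$, and then observe that $M(1)=1-m$ is even when $m$ is odd, forcing $p(1)$ even. The only differences are cosmetic---you spell out the trivial bound $\|\nu_\beta^{(n)}\|\le 1$ and flag the irreducibility of $M$ as the one nontrivial input, whereas the paper simply calls $M$ ``its minimal polynomial'' without further comment.
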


\noindent\textit{Remark 2.} By taking the Fourier transform of $\nu_\beta^{(n)}$, one obtains
$$F_n(\beta;\xi):=\prod_{j=1}^n \sin(2\pi \beta^{-j}\xi),\ \xi\in\R.$$
Since the Fourier transform satisfies $\|\widehat{\nu}\|_\infty\le \|\nu\|$, this provides an upper bound for  $\|F_n(\beta;\cdot)\|_\infty$. In particular, when $\beta=\frac{1+\sqrt{5}}{2}$, Theorem \ref{thm} gives
$$\|F_n\|_\infty\le C (0.771844\cdots)^n.$$
Sharpness of this bound will be addressed in \cite{ChenHu}.

\bibliographystyle{abbrv}
\bibliography{bibliography}
\newpage
\end{document}